\documentclass[11pt,reqno]{amsart}

\usepackage[utf8]{inputenc}
\usepackage[T1]{fontenc}
\usepackage{amsmath,amssymb,amsthm}
\usepackage{mathtools}
\usepackage{enumitem}
\usepackage{array}
\usepackage[colorlinks=true,linkcolor=blue,citecolor=blue,urlcolor=blue]{hyperref}

\theoremstyle{plain}
\newtheorem{theorem}[subsection]{Theorem}
\newtheorem{lemma}[subsection]{Lemma}
\newtheorem{proposition}[subsection]{Proposition}
\newtheorem{corollary}[subsection]{Corollary}
\theoremstyle{definition}

\theoremstyle{remark}
\newtheorem{remark}[subsection]{Remark}

\newcommand{\Z}{\mathbb{Z}}
\newcommand{\R}{\mathbb{R}}
\newcommand{\Sym}{\operatorname{Sym}}
\newcommand{\Isom}{\operatorname{Isom}}
\newcommand{\Aut}{\operatorname{Aut}}
\newcommand{\Out}{\operatorname{Out}}
\newcommand{\Th}{\mathit{Th}}
\newcommand{\cF}{\mathcal{F}}
\newcommand{\cFpm}{\mathcal{F}^{\pm}}
\newcommand{\lam}{\lambda}

\begin{document}

\title[The symmetry group of Turk's head knots]
      {The symmetry group of Turk's head knots}

\author{Werner Alois Stanggassinger}
\address{Wasserburg am Inn, Bavaria, Germany}
\email{stanggassinger@stainlesssteel4u.de}

\thanks{This work was developed in collaboration with the AI system Claude Code (Anthropic):
the choice of problem, the direction of the work and the computing hardware are due to the author
(a sculptor); the formalisation, the algorithms and the proofs were developed by Claude Code
(Anthropic) working with the author. The collaboration is stated as a fact, not a disclaimer; details
are in the section on tool use at the end.}

\subjclass[2020]{Primary 57K10; Secondary 57M60}
\keywords{Turk's head knot, symmetry group, alternating knot, flyping theorem}

\date{16 October 2026}

\begin{abstract}
For $p,q\ge 3$ with $\gcd(p,q)=1$ we show that the symmetry group of the Turk's head knot
$\Th(p,q)$, the closure of the braid $(\sigma_1\sigma_2^{-1}\sigma_3\sigma_4^{-1}\cdots\sigma_{p-1}^{\pm1})^q$,
is the dihedral group of order $4q$. This proves Conjecture~R of Di Prisa and \c{S}avk for $q\ge3$;
the case $p=3$ is due to Sakuma and Weeks. The argument is diagrammatic: by the flyping theorem of
Menasco and Thistlethwaite, the symmetries of a knot with a prime, reduced alternating diagram without non-trivial flypes
are the symmetries of the diagram (we write out a proof for the diagrams at hand), and we show that the standard diagram of $\Th(p,q)$ has no
non-trivial flypes and exactly $4q$ symmetries. The chirality of $\Th(p,q)$ for even $p$, which also follows from a theorem of Murasugi, is visible in the group.
We compare the argument with the geometric one of Sakuma and Weeks for $p=3$.
\end{abstract}

\maketitle

\section{Introduction}
\label{sec:intro}

The Turk's head knot $\Th(p,q)$ is the closure of the $p$-strand braid
\[
  \beta_p^{\,q},\qquad \beta_p=\sigma_1\sigma_2^{-1}\sigma_3\sigma_4^{-1}\cdots\sigma_{p-1}^{\pm1}.
\]
It is a knot if and only if $\gcd(p,q)=1$. The survey of Di Prisa and \c{S}avk~\cite{DS} collects
what is known about these knots and poses a list of open problems. The symmetry group of a knot is defined
there as the mapping class group of the knot exterior~\cite[\S5.2]{DS}, and for $p=3$ it is known
from the work of Sakuma and Weeks~\cite[Proposition~I.2.5]{SW95} (quoted as Theorem~M
in~\cite{DS}). For $p,q$ both odd, a result of Kodama and Sakuma implies that the group is dihedral
of some order~\cite[\S6.3]{DS}. The survey states:

\begin{quote}
\textbf{Conjecture R} \cite[\S6.3]{DS}. \emph{Suppose $\gcd(p,q)=1$. Then the Turk's head knot
$\Th(p,q)$ has symmetry group given by $\Sym(S^3,\Th(p,q))\cong D_{2q}$.}
\end{quote}

\noindent Here $D_n$ denotes the dihedral group with $2n$ elements, so $D_{2q}$ has order $4q$.
The survey assumes $p\ge3$ and $q\ge2$ throughout~\cite[\S4.1]{DS}. We prove the case $q\ge3$;
for $q=2$ the diagram has bigons, and that case is not treated here.

\begin{theorem}\label{thm:main}
Let $p\ge 3$, $q\ge 3$ with $\gcd(p,q)=1$. Then $\Sym(S^3,\Th(p,q))\cong D_{2q}$, the dihedral
group of order $4q$.
\end{theorem}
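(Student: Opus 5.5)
The argument follows the outline in the abstract and has three steps. First, realise the standard diagram $D$ of $\Th(p,q)$ (the closure of $\beta_p^{\,q}$) on $S^2$, and view it as an alternating diagram whose underlying 4-valent graph is the $p$-by-$q$ "woven" pattern on an annulus. Second, show that $D$ is prime, reduced and has no non-trivial flypes. Third, invoke the flyping theorem of Menasco and Thistlethwaite to conclude that every symmetry of the pair $(S^3,\Th(p,q))$ is isotopic to a symmetry of $D$ on $S^2\subset S^3$. This could include reflection through $S^2$ composed with a diagram map that reverses all crossings. Since the exterior is hyperbolic, or at least since the mapping class group is detected by the diagram in this setting, $\Sym$ is then identified with the group of combinatorial automorphisms of $D$ that preserve the alternating crossing data up to the global mirror. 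What remains is to count these automorphisms.

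The faces of $D$ are as follows. The two "polar" faces are $q$-gons, one around the braid axis and one at infinity. The other faces form $p-2$ rings of $q$ quadrilaterals each. In addition, triangles sit at the boundary between each polar face and its adjacent ring. For $q\ge3$ there are no bigons, so $D$ is reduced, and primeness follows because every face meets any other face in at most one edge. To rule out flypes, I will use the standard characterisation: a non-trivial flype requires a flype circle, meaning a simple closed curve meeting $D$ in four points and cutting off a tangle that is not a single crossing, next to a crossing that can be moved. Equivalently, it requires two faces sharing two non-adjacent crossings, or a pair of faces whose boundaries meet at two distinct vertices. I will check from the explicit face adjacency that any two faces meet in at most one vertex or one edge. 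The two polar $q$-gons are the delicate case, but they are never adjacent when $p\ge3$. It follows that every 4-point circle bounds a single crossing or a trivial tangle. I expect this to be the main obstacle. The reason is that the generic ring-shaped regions of a checkerboard lattice on an annulus could in principle give 4-point circles going around the annulus, so I must show that such circles meet $D$ in at least $2p$ or $2q$ points, which is more than $4$ once $p,q\ge3$.

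Finally, I count the symmetries of $D$. The $q$-fold rotation about the braid axis gives $\Z/q$. The two polar $q$-gons are distinguished from all other faces by their size when $q\ne4$; when $q=4$ they are distinguished by being faces whose neighbours are all triangles. Hence any automorphism either fixes or swaps the two polar faces. Rotation by $\pi$ about an axis in the projection plane swaps them. The reflection through a vertical plane containing the axis preserves $D$ combinatorially and interchanges the crossing signs, and then composes with the mirror. Orientation reversal of the knot is realised by one of these maps. Rigidity of the lattice means an automorphism is determined by the image of one flag, and this bounds the group by the $4q$ flag-type choices compatible with the alternating structure. So the group has order exactly $4q$. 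It contains an index-two cyclic subgroup $\Z/2q$, generated by a rotation composed with a half-shift, or by the rotation in the swap direction. The remaining elements act by inversion on this subgroup, so the group is $D_{2q}$. For even $p$, I will check that the orientation-reversing elements of $S^3$ do not occur, which is consistent with chirality, while the count remains $4q$ through inverting symmetries.
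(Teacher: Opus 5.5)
\textbf{The gap is in your flype-exclusion step.} You plan to check that any two faces meet in at most one vertex or one edge. That condition only says the Tait graphs are simple and loopless. It is not equivalent to the absence of non-trivial flypes, or of Haseman circles with crossings on both sides.

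A circle meeting $D$ in four points passes through faces $B_1,W_1,B_2,W_2$ of alternating colours. The faces $B_1$ and $B_2$ need not share any crossing at all. Even in a genuine flype pattern, the top and bottom faces need share nothing but the flyping crossing $x$. What a circle with at least two crossings on each side actually forces, given simplicity, is that $\{B_1,B_2\}$ is a vertex cut of the black Tait graph; your local check does not exclude this.

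Your extra count disposes only of circles separating $F_0$ from $F_p$. Circles bounding a disc inside the annulus, or passing through a polar face, are covered only by the unjustified ``it follows''.

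The missing idea is $3$-connectivity. On the even-ring faces the Tait graph is a hub, then a stack of prisms $C_q\times P_m$, then a second hub if $p$ is even. This is the graph of a $3$-polytope, so it is $3$-connected by Balinski. Now take a Haseman circle:
\begin{enumerate}
\item A side containing no black face entirely has only crossings that are Tait edges $B_1B_2$ or loops. Simplicity and reducedness then allow at most one crossing there.
\item If both sides contain a black face entirely, then $\{B_1,B_2\}$ (or the single vertex $B_1=B_2$) disconnects the graph, a contradiction.
\end{enumerate}
This is the content of Lemmas~\ref{lem:B1} and~\ref{lem:B2}.

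\textbf{Smaller points.}
\begin{itemize}
\item Your face count is off. Rings $1$ and $p-1$ are the triangles, and only rings $2,\dots,p-2$ are quadrilaterals: $p-3$ rings, none when $p=3$. So there are $q(p-1)+2$ faces.
\item ``No bigons, so $D$ is reduced'' is a non sequitur. Reduced means no nugatory crossing, i.e.\ no loop in the Tait graph.
\item For $q=3$ the polar faces are triangles, like the faces of rings $1$ and $p-1$, so size does not distinguish them. Your ``all neighbours are triangles'' test does work here, since $3\nmid p$ forces $p\ge4$.
\item The reduction step needs injectivity as well as surjectivity: distinct diagram symmetries must not be pairwise isotopic. ``The exterior is hyperbolic'' points toward this but is not an argument. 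You must rule out a non-trivial element of a finite group action being isotopic to the identity. The paper does this with Borel's theorem on the double of the exterior, using that $\pi_1$ is centreless and the peripheral subgroup is malnormal.
\item For surjectivity, Menasco--Thistlethwaite leaves degenerate flypes. You still need that these are isotopic to maps of product form near $S^2$, and that such a map preserving $\lambda(D)$ is isotopic to a symmetry of the drawing. The paper proves this in Lemmas~\ref{lem:T}, \ref{lem:A3}, \ref{lem:F} and~\ref{lem:S}. Citing it as a known consequence of the flyping theorem is reasonable, but it is not automatic.
\end{itemize}
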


\begin{corollary}\label{cor:chiral}
Let $p\ge3$, $q\ge3$ with $\gcd(p,q)=1$. For $p$ even, $\Th(p,q)$ is chiral. For $p$ odd, $\Th(p,q)$ is amphichiral.
\end{corollary}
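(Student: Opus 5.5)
The plan is to read chirality off the structure of the group in Theorem~\ref{thm:main}, using the diagrammatic description behind it. By the flyping theorem (as set up in the paper for these diagrams), every element of $\Sym(S^3,\Th(p,q))$ is realised by a symmetry of the standard diagram $D$ of $\Th(p,q)$, viewed as a map of $(S^2\times I, S^2)$ that may reverse orientation of $S^3$. The $4q$ diagram symmetries are generated by three kinds of maps. First, the rotation by $2\pi/q$ about the braid axis, which shifts the $q$ copies of $\beta_p$ and preserves the orientation of $S^3$. Second, a flip of the braid closure (rotation by $\pi$ about a horizontal axis in the projection plane), which reverses the braid word. Third, possibly a reflection composed with a crossing change. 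Chirality is therefore decided by whether some diagram symmetry reverses the orientation of $S^3$. Such a symmetry must carry $D$ to its mirror image, that is, to the diagram with all crossings switched.

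\textbf{Odd $p$.} Here $\beta_p=\sigma_1\sigma_2^{-1}\cdots\sigma_{p-2}\sigma_{p-1}^{-1}$ has $p-1$ letters, an even number, alternating in sign. I would exhibit the explicit map given by reflecting the braid across its middle strand, i.e.\ $\sigma_i\mapsto\sigma_{p-i}$. A reflection in a plane containing the braid axis reverses orientation and sends $\sigma_i^{\pm1}$ to $\sigma_{p-i}^{\mp1}$. Since $p$ is odd, $i$ and $p-i$ have opposite parity. So $\sigma_i^{\epsilon_i}$, with $\epsilon_i=(-1)^{i+1}$, goes to $\sigma_{p-i}^{-\epsilon_i}=\sigma_{p-i}^{\epsilon_{p-i}}$, and $\beta_p$ is carried to itself up to reordering. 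The factors $\sigma_j$ with $|i-j|\ge2$ commute, so $\beta_p$ equals the product of its odd-indexed generators times the product of its even-indexed ones, and this presentation is preserved. Hence $\Th(p,q)$ is amphichiral. This is a routine check, and I would also point to the corresponding orientation-reversing element in the list of $4q$ symmetries.

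\textbf{Even $p$.} Now $p-i\equiv i \pmod 2$, so the same reflection sends $\beta_p$ to the mirror word rather than to $\beta_p$. The main step is to show that no element of the order-$4q$ group reverses orientation of $S^3$. I would use an invariant of $D$ itself, since only diagram symmetries need to be considered. Any orientation-reversing symmetry maps $D$ to a diagram isotopic on $S^2$ to the mirror $\overline D$. So I would compare the checkerboard data of $D$ and $\overline D$: the number of crossings whose over-strand, read along the braid direction, is a $\sigma^{+1}$ versus a $\sigma^{-1}$. Equivalently, I would use the writhe of the reduced alternating diagram, which is a knot invariant by the Tait flyping conjecture. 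The generator $\sigma_{p-1}$ occurs with sign $+$ when $p$ is even. Then $\beta_p$ has $p/2$ positive and $p/2-1$ negative letters, so the writhe of $D$ is $q\bigl(p/2-(p/2-1)\bigr)=q\ne0$. Mirroring changes the sign of the writhe, so $D$ is not equivalent to $\overline D$ and $\Th(p,q)$ is chiral.

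The obstacle is justifying that the writhe (equivalently the signature-type invariant) is well defined. This is exactly Thistlethwaite/Murasugi invariance of writhe for reduced alternating diagrams, cited in the abstract. Alternatively, it falls out of the classification in the theorem: the $4q$ diagram symmetries in the even case are the $2q$ rotations and the $2q$ flips, all of which preserve orientation.
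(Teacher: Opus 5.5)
Your proposal is correct, and for even $p$ it takes a genuinely different route from the paper.

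\textbf{The paper's route.} The paper reads both halves off the character $(g,\varepsilon)\mapsto\operatorname{sign}(g)\cdot\varepsilon$ on $\Sym(\Pi)$ (Remark~\ref{rem:chirality}). This character is $+1$ on $\rho$ and on every $\mu_a$, and $(-1)^p$ on $\sigma$. For even $p$ it is therefore trivial. Chirality then follows only because $\Phi$ is onto, specifically Proposition~\ref{prop:surj-minus}, which rests on Theorem~\ref{thm:MT} and Lemmas~\ref{lem:A3}, \ref{lem:F} and~\ref{lem:S}. Your closing ``alternatively'' is exactly this argument.

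\textbf{Your even case.} Your main argument needs none of that machinery:
\begin{itemize}
\item the diagram is reduced and alternating (Lemma~\ref{lem:B2});
\item its writhe is the exponent sum $q\ne0$ of $\beta_p^{\,q}$, and for a knot the writhe does not depend on the orientation;
\item writhe invariance among reduced alternating diagrams (Kauffman, Murasugi, Thistlethwaite) forces writhe $0$ for an amphichiral knot. This invariance also follows from the flyping theorem, since flypes preserve writhe.
\end{itemize}
This is close to the alternative the paper mentions after the corollary, \cite[Corollary~5]{Mur87}: an amphichiral alternating knot has an even crossing number in a proper alternating projection, whereas here $c=(p-1)q$ is odd. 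Indeed writhe $0$ would force $c$ even, since the writhe is congruent to $c$ mod $2$.

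\textbf{What each route buys.} Your route is much shorter and independent of the computation of the group. The paper's route obtains chirality from the group computation itself, with no further invariant. It also identifies the orientation-reversing classes for odd $p$ as exactly the $2q$ elements $\sigma\rho^k$, $\sigma\mu_a$.

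\textbf{Your odd case.} Both arguments exhibit an explicit orientation-reversing symmetry, but different ones. Closed up in the symmetric drawing, your reflection $\sigma_i^{\pm1}\mapsto\sigma_{p-i}^{\mp1}$ becomes $(\theta,h)\mapsto(\theta+c,p-h)$ with $c\in\tfrac12+\Z$. This is an element of type $\sigma\mu_a$, with sign $-$ and $\varepsilon=+1$; for $c=\tfrac12$ it is $\tau=\sigma\mu_a$ with $a=\tfrac{p-3}{2}$. The paper instead uses $\sigma$, with sign $+$ and $\varepsilon=-1$.

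One step in your odd case needs correcting. Far commutativity does not give $\beta_p=(\text{odd-indexed factors})(\text{even-indexed factors})$. This equality is false in $B_p$ for $p\ge4$, because $\sigma_2^{-1}\sigma_3\ne\sigma_3\sigma_2^{-1}$. If you also allow cyclic shifts, the two words are conjugate, for example
\[
\sigma_1\sigma_2^{-1}\sigma_3\sigma_4^{-1}\sim\sigma_3\sigma_4^{-1}\sigma_1\sigma_2^{-1}=\sigma_1\sigma_3\sigma_2^{-1}\sigma_4^{-1}.
\]
Conjugacy of $\beta_p$ is all that the closure of $\beta_p^{\,q}$ requires, so your argument stands once ``equals'' is replaced by ``is conjugate to''.
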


Both statements are known. The amphichirality for odd $p$ is in~\cite[\S4.4]{DS}; it reappears here as an explicit element of the group.
The chirality for even $p$ follows from~\cite[Corollary~5]{Mur87}: a proper alternating projection of an
amphichiral alternating knot has an even number of crossings, and $(p-1)q$ is odd for $p$ even. Here it is read off the group.

\subsection*{Method}
The proof has three parts.
\begin{enumerate}[label=(\Alph*)]
\item \emph{From the knot to the diagram} (Section~\ref{sec:reduction}). For a reduced alternating
diagram without non-trivial flypes, every symmetry of the knot is isotopic to a symmetry of the
diagram, and distinct symmetries of the diagram stay distinct. Surjectivity rests on the Main
Theorem of Menasco and Thistlethwaite~\cite{MT93} in its form for homeomorphisms of pairs;
injectivity rests on Borel's theorem on finite group actions, as restated in~\cite{CWY}. Menasco and Thistlethwaite
note this consequence of their theorem~\cite[pp.~406--407]{MT91}, and Quach Hongler and Weber
state the reduction~\cite[p.~4]{QHW}; we give a proof for the diagrams at hand, for both orientations.
\item \emph{The diagram has no non-trivial flypes} (Section~\ref{sec:polyhedral}). Its Tait graph
is simple and $3$-connected, so every circle meeting the diagram in four points has at most one
crossing on one side.
\item \emph{The diagram has exactly $4q$ symmetries, forming $D_{2q}$} (Section~\ref{sec:count}),
by an explicit count.
\end{enumerate}
Parts (B) and (C) are elementary. Part (A) uses the flyping theorem as a black box together with
standard facts about hyperbolic knot exteriors, and four lemmas proved here
(Lemmas~\ref{lem:T}, \ref{lem:A3}, \ref{lem:F} and~\ref{lem:S}).

For $p=3$, Sakuma and Weeks obtain the group geometrically, from the canonical decomposition of the
knot complement. Section~\ref{sec:SW} compares the two arguments.

\section{The diagram}
\label{sec:diagram}

Let $\Pi$ be the closure of $\beta_p^{\,q}$ drawn in an annulus, with angle $x$ and radius $y$; the
strand positions (``levels'') are $1,\dots,p$ from inside out.

\emph{Crossings.} The crossings are $c(i,k)$ with ring $i\in\{1,\dots,p-1\}$ (between levels $i$ and
$i+1$) and $k\in\Z_q$; in the braid picture $c(i,k)$ sits at angle $\bigl(k+\tfrac{i-1}{p-1}\bigr)\tfrac{2\pi}{q}$.

\emph{Edges.} Along level $j$ the crossings met are, in angular order,
\begin{itemize}
\item $j=1$: $c(1,k)\to c(1,k+1)$;
\item $2\le j\le p-1$: $c(j-1,k)\to c(j,k)\to c(j-1,k+1)\to c(j,k+1)$;
\item $j=p$: $c(p-1,k)\to c(p-1,k+1)$.
\end{itemize}
So the edges of the $4$-valent map $M$ underlying $\Pi$ are $\{c(1,k),c(1,k+1)\}$ on level $1$;
$\{c(j-1,k),c(j,k)\}$ and $\{c(j,k),c(j-1,k+1)\}$ on level $j$, $2\le j\le p-1$; and
$\{c(p-1,k),c(p-1,k+1)\}$ on level $p$.

\emph{Half-edges.} At $c(i,k)$ the four half-edges are $EL,EU,WU,WL$: east or west by angle, lower or
upper by level $i$ or $i+1$. This is their counter-clockwise order in the chart with $x$ to the right and the level upwards
(the cylinder chart of Section~\ref{subsec:drawing}); in the annulus, with $x$ increasing counter-clockwise
and the levels outwards, it is clockwise.

\emph{Crossing data.} The diagram is alternating. The strand that rises ($WL$--$EU$) is the
over-strand exactly when $i$ is odd; this is the sign of $\sigma_i^{(-1)^{i+1}}$. (With the opposite
braid convention everything below holds with ``odd'' and ``even'' exchanged.)

\emph{Faces.} $F_0$ is the inner disc and $F_p$ the outer face. For $1\le i\le p-1$, $F(i,k)$ is the
region of ring $i$ between $c(i,k)$ and $c(i,k+1)$. Counting crossings on the boundary:
$\deg F_0=\deg F_p=q$; rings $1$ and $p-1$ consist of triangles, each with one crossing of the
neighbouring ring on one side; rings $2,\dots,p-2$ consist of quadrilaterals, each with one crossing
of ring $i-1$ and one of ring $i+1$. There are $2+q(p-1)$ faces, which is $c+2$ with $c=q(p-1)$.
Faces in ring $i$ share edges only with faces in rings $i\pm1$ (an edge is an arc of a level), so
colouring by the parity of $i$ is the checkerboard colouring.

\emph{Primeness.} $\Th(p,q)$ is prime as a knot~\cite[\S4.2]{DS}; it is hyperbolic~\cite[Theorem~I]{DS},
after Meyerhoff--Ruberman~\cite{MR90} and Menasco~\cite{Men84}. The diagram is alternating by
construction and reduced by Lemma~\ref{lem:B2} (the Tait graph has no loops and, being $3$-connected, no
bridges); it is prime as a diagram by Remark~\ref{rem:diagram-prime} below.

\section{The diagram is polyhedral}
\label{sec:polyhedral}

A \emph{Haseman circle} of a diagram is a circle meeting it transversally in four points away from
the crossings~\cite{QHW}.

\begin{lemma}\label{lem:B1}
Let $\Pi$ be a reduced alternating projection whose Tait graph $G$ (one colour class of faces) is
simple and $3$-connected. Then every Haseman circle of $\Pi$ bounds, on one side, a disc containing at
most one crossing. Hence $\Pi$ equals its polyhedral part.
\end{lemma}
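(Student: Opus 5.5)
Given a Haseman circle $C$, I translate it into a small edge cut of the Tait graph $G$ and then use simplicity and $3$-connectivity to force one side to be trivial.

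\emph{Setup.} Put $C$ in general position with respect to $\Pi$, meeting it in four points on edges of the underlying $4$-valent map. Colour the faces so that $G$ has a vertex for each shaded face and an edge through each crossing. The four arcs of $C$ between consecutive intersection points alternate between shaded and unshaded faces, because crossing an edge of $\Pi$ changes the colour. Hence $C$ passes through exactly two shaded regions, counted with multiplicity; an arc of $C$ may revisit a face.

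\emph{Reduction to a cut of $G$.} Let $D$ and $D'$ be the two discs bounded by $C$. Each shaded face is either cut by $C$ or lies entirely in $D$ or in $D'$. Each crossing lies in $D$ or $D'$, so each edge of $G$ lies on one side. The shaded faces cut by $C$ are at most two, say $f_1,f_2$ (possibly $f_1=f_2$). Contract each piece $f_j\cap D$ mentally, so a cut face counts as a single vertex. Then every path in $G$ from a vertex inside $D$ to a vertex inside $D'$ must pass through $f_1$ or $f_2$. Hence $\{f_1,f_2\}$ separates $G$, unless one side contains no shaded vertex other than pieces of $f_1,f_2$.

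By $3$-connectivity, one side, say $D$, contains no shaded face entirely. The same argument with the dual graph $G^*$ also applies, since the dual of a $3$-connected simple planar graph is $3$-connected and simple. It shows that one side contains no unshaded face entirely.

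\emph{Main obstacle.} I must check that these two sides agree, and then count crossings. Suppose $D$ contains no whole face of either colour. Then every face meeting $D$ is one of the at most four faces cut by $C$. So the crossings in $D$ are vertices of the map incident only to these four faces, two shaded and two unshaded. A crossing is incident to two shaded and two unshaded corners. Two crossings in $D$ would then give two edges of $G$ joining $f_1$ and $f_2$, contradicting simplicity. If $f_1=f_2$, such a crossing would instead give a loop, contradicting simplicity.

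It remains to exclude the mixed case. Suppose $D$ has no whole shaded face while $D'$ has no whole unshaded face. Then $D$ contains at least one whole unshaded face, and all of its shaded neighbours are $f_1,f_2$. Such an unshaded face, together with a second one, would give a $2$-vertex cut of $G^*$. A single unshaded face with at least three sides gives multiple edges between $f_1$ and $f_2$ in $G$. This is again excluded by simplicity; reducedness rules out the degenerate one-sided configurations. I expect this bookkeeping of the mixed case, and of faces that $C$ meets in more than one arc, to be the delicate step.

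\emph{Conclusion.} Since every Haseman circle bounds a disc with at most one crossing, there are no nontrivial tangle decompositions along Haseman circles. Hence $\Pi$ equals its polyhedral part.
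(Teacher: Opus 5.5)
Your argument is essentially the paper's: the shaded faces $f_1,f_2$ met by the circle form a vertex cut of $G$ (a cut vertex if $f_1=f_2$) unless one side $D$ contains no whole shaded face, and in that case every crossing in $D$ has both shaded corners in $f_1\cup f_2$, hence is an $f_1$--$f_2$ edge or a loop of $G$, so simplicity and reducedness leave at most one crossing in $D$. That count uses only the shaded corners, so the ``main obstacle'' is not real: the passage to $G^*$ and the mixed case can be dropped. This also removes the loose parts of the mixed case as written, namely the sentence about a $2$-vertex cut of $G^*$, which is unjustified, and the restriction to ``at least three sides'', which omits an unshaded bigon in $D$ (its two crossings would again be parallel $f_1$--$f_2$ edges).
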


\begin{proof}
A Haseman circle $\gamma$ crosses four edges, so it passes through four face-arcs. Consecutive
face-arcs lie in faces on opposite sides of an edge, so their colours alternate: black $B_1$, white,
black $B_2$, white.

Consider one side $\Delta$ of $\gamma$ and suppose $\Delta$ contains no black face entirely. Every
crossing in $\Delta$ has its two black corners in black faces that meet $\Delta$, hence in $B_1$ or
$B_2$. That crossing is therefore a Tait edge joining $B_1$ to $B_2$, or a loop at $B_1$ or $B_2$.
Loops are nugatory crossings, excluded because $\Pi$ is reduced. Two crossings joining $B_1$ to $B_2$
would be parallel Tait edges, excluded because $G$ is simple. So such a side contains
at most one crossing.

Now suppose both sides contain a black face entirely. If $B_1\ne B_2$, removing $B_1$ and $B_2$
disconnects $G$, contradicting $3$-connectivity. If $B_1=B_2$, removing that one vertex already
disconnects $G$.

Hence one side contains at most one crossing. A circle around one crossing bounds a singleton. A
circle with no crossing on one side is compressible and is excluded by Rule~2 of~\cite{QHW}. So the
minimal Conway family consists of the circles around single crossings, and the rest is one diagram
all of whose Haseman circles are boundary parallel, a basic diagram: $\Pi$ is polyhedral.
\end{proof}

\begin{lemma}\label{lem:B2}
For $p\ge3$, $q\ge3$ the Tait graph of $\Th(p,q)$ on the even-ring faces is simple and $3$-connected.
\end{lemma}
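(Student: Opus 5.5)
The plan is to write down the Tait graph $G$ on the even-ring faces explicitly from the data of Section~\ref{sec:diagram}, recognise it as a stack of concentric $q$-cycles joined by matchings and capped by one or two hubs, and then check simplicity and $3$-connectivity by hand.

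\emph{Description of $G$.} The vertices are $F_0$, the faces $F(i,k)$ with $i$ even, and $F_p$ if $p$ is even. Each crossing $c(i,k)$ gives one edge, joining the two even faces among its four corners. The two corners in ring $i$ are $F(i,k-1)$ and $F(i,k)$. Reading off the level edges, the corner below $c(i,k)$ is $F(i-1,k)$ and the corner above is $F(i+1,k-1)$, with $F(0,\cdot)=F_0$ and $F(p,\cdot)=F_p$.
\begin{itemize}
\item For $i$ even, $c(i,k)$ joins $F(i,k-1)$ to $F(i,k)$. So the faces of each even ring $i$ form a cycle $C_i$ of length $q$.
\item For $i$ odd, $c(i,k)$ joins $F(i-1,k)$ to $F(i+1,k-1)$. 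For $3\le i\le p-3$ this is a perfect matching between $C_{i-1}$ and $C_{i+1}$.
\item For $i=1$, $F_0$ becomes a hub adjacent to every vertex of $C_2$.
\item For $i=p-1$ with $p$ even, $F_p$ is a hub adjacent to every vertex of $C_{p-2}$.
\end{itemize}
For example, $p=3$ gives the wheel with $q$ spokes and $p=4$ the $q$-gonal bipyramid.

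\emph{Simplicity.} No crossing has both even corners in the same face, since its two even corners are in distinct faces, so there are no loops. Because $q\ge3$, each $C_i$ is a simple cycle with no doubled edges. Matching edges join distinct pairs of vertices in different rings. Hub edges go to distinct vertices. Edges of different types join different pairs of rings, so no two edges are parallel.

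\emph{$3$-connectivity.} For each $k$, let the \emph{spoke} $P_k$ be the radial path $F_0, F(2,k), F(4,k-1), F(6,k-2),\dots$ through the matching edges, ending at $F_p$ when $p$ is even. The $q\ge3$ spokes are disjoint away from the hubs, and every non-hub vertex lies on exactly one spoke. Remove a set $S$ of at most two vertices; we show that $G-S$ is connected.
\begin{enumerate}
\item \emph{$S$ contains no hub.} At most two spokes are hit, so some spoke $P$ survives and meets every cycle. Let $v$ be a remaining vertex on the cycle $C$.
 \begin{enumerate}
 \item If $|S\cap C|\le1$, then $C-S$ is a path, and $v$ reaches $P\cap C$ along it.
 \item If both removed vertices lie on $C$, then no other cycle is hit. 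Hence the spoke through $v$ is intact except possibly at $C$, where it passes through $v$ itself. It therefore joins $v$ to $F_0$, which lies on $P$.
 \end{enumerate}
\item \emph{$S$ contains exactly one hub.} At most one further vertex is removed. So every cycle loses at most one vertex, at least two spokes survive, and the same path argument applies.
\item \emph{$S$ consists of both hubs.} All cycles are intact, and the matchings connect consecutive cycles.
\end{enumerate}

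The main care needed is in the bookkeeping of the index shift $k\mapsto k-1$ across odd rings and the two parities of $p$. The combinatorial argument itself is short once $G$ is drawn as a stacked prism with hubs.
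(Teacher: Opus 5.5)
Your proof is correct. Your description of $G$ agrees with the paper's: the corners $F(i,k-1),F(i,k),F(i-1,k),F(i+1,k-1)$, the $q$-cycles on the even rings, the matchings between consecutive even rings, and one or two hubs. Your simplicity argument is a more detailed version of the paper's ``simple because $q\ge3$''.

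The routes differ only in how $3$-connectivity is obtained.
\begin{itemize}
\item The paper observes that $G$ is the graph of a convex $3$-polytope: a stack of prisms capped by a pyramid at one or both ends, which is a wheel for $p=3$ and a bipyramid for $p=4$. It then cites Balinski's theorem.
\item You check directly that no set of at most two vertices disconnects $G$, using the $q\ge3$ disjoint radial spokes through the matchings.
\end{itemize}
The paper's route is shorter, but it rests on two things it does not spell out: the convex realisation of this graph, and Balinski's theorem. Yours is self-contained and elementary, and it shows exactly where $q\ge3$ enters the connectivity: a spoke survives the removal of any two vertices.

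Two small points are worth stating explicitly. First, $G$ has at least $q+1\ge4$ vertices, as the definition of $3$-connectivity requires. Second, in your case (2), the surviving spoke with the removed hub deleted is still a path meeting every cycle, and the other hub, if there is one, still lies on it.
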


\begin{proof}
At the crossing $c(i,k)$ the two faces of ring $i$ are opposite corners, and $F(i-1,k)$ and
$F(i+1,k-1)$ are the other two corners; read $F(i-1,\cdot)$ as $F_0$ when $i=1$, and $F(i+1,\cdot)$
as $F_p$ when $i=p-1$. Take the even rings as the colour class. Then:
\begin{itemize}
\item a crossing with $i$ even gives a Tait edge $F(i,k-1)$--$F(i,k)$, so each even ring
$2\le i\le p-1$ is a $q$-cycle;
\item a crossing with $i$ odd gives an edge $F(i-1,k)$--$F(i+1,k-1)$, a bijection between
consecutive even rings respecting their cyclic order;
\item $F_0$ (ring $0$) is a hub joined once to each face of ring $2$;
\item if $p$ is even, $F_p$ is a second hub joined once to each face of ring $p-2$.
\end{itemize}
So $G$ is a hub, then a stack of prisms $C_q\times P_m$ ($m\ge1$ cycles), then a second hub if $p$
is even. This is the graph of a $3$-polytope: for $p$ odd a stack of prisms with a pyramid on one end (a
pyramid when $p=3$), for $p$ even a stack of prisms with a pyramid on each end (a bipyramid when $p=4$). By Balinski's theorem~\cite{Bal61} $G$ is $3$-connected. It is simple because $q\ge3$.
\end{proof}

\begin{remark}\label{rem:diagram-prime}
$\Pi$ is prime as a diagram. Suppose a circle met $\Pi$ in two points and had crossings on both sides.
It passes through one black and one white face. A side containing no black face entirely has only
crossings that are Tait loops at that black face, i.e.\ nugatory crossings, which are excluded. So both
sides contain black faces, and the black face it passes through is a cut vertex of the Tait graph,
contradicting Lemma~\ref{lem:B2}.
\end{remark}

\section{The symmetries of the diagram}
\label{sec:count}

Let $\Sym(\Pi)$ be the group of pairs $(g,\varepsilon)$, where $g$ is a homeomorphism of $S^2$
preserving $\Pi$ up to isotopy (made precise as an isometry of a symmetric drawing in
Section~\ref{sec:reduction}), and $\varepsilon=\pm1$ is such that $g$ sends over-strands to
over-strands ($\varepsilon=+1$) or to under-strands ($\varepsilon=-1$).

\begin{lemma}[upper bound]\label{lem:upper}
$|\Sym(\Pi)|\le|\Aut(M)|\le 4q$.
\end{lemma}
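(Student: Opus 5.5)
We prove the two inequalities separately. For the first, we map $(g,\varepsilon)\in\Sym(\Pi)$ to the combinatorial automorphism of $M$ induced by $g$: $g$ permutes crossings, edges and faces and preserves incidences. Here $\Aut(M)$ means automorphisms of the map as a cellular embedding in $S^2$, orientation-reversing ones allowed. The task is injectivity. If $g$ induces the identity on $M$, it fixes every crossing and every edge, so the over/under data force $\varepsilon=+1$. An isometry of the symmetric drawing that fixes every vertex and edge of a connected $4$-valent map with at least three crossings is the identity. Hence $(g,\varepsilon)$ is trivial.

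For $|\Aut(M)|\le 4q$ we use the face structure of Section~\ref{sec:diagram}. Since $q\ge3$, the only faces of degree $q$ are $F_0$ and $F_p$; every other face has degree $3$ or $4$. The one exception is $q=4$, where the quadrilaterals of rings $2,\dots,p-2$ also have degree $4$ and we need finer information. So we first show that any automorphism $\phi$ preserves the pair $\{F_0,F_p\}$. For $q\ne 3,4$ this follows from degree alone. For $q=3$ (the triangles of rings $1$ and $p-1$ versus $F_0,F_p$) and for $q=4$ we distinguish $F_0,F_p$ by adjacency. $F_0$ shares vertices only with triangles of ring $1$, and the faces edge-adjacent to $F_0$ are exactly the $q$ triangles of ring $1$. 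These triangles each meet $F_0$ along an edge, and two consecutive ones share a crossing. A triangle of ring $1$, by contrast, is edge-adjacent to $F_0$ and to two quadrilaterals (or to triangles of ring $p-1$ when $p=3$). When $p=3$ the whole structure is an antiprism-like map, and a direct check handles it. I expect this case analysis for small $q$ and small $p$ (namely $p=3$, and $q\in\{3,4\}$) to be the main obstacle. I would organise it by the ``distance'' invariant: the faces at distance $0$ from the set $\{F_0,F_p\}$ in the dual graph, ring by ring. Alternatively, for $q=4$, I would note that a $4$-face of ring $i$ has its four edge-neighbours in rings $i\pm1$, while all four edge-neighbours of $F_0$ lie in a single ring forming a cycle of triangles.

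Once $\phi$ preserves $\{F_0,F_p\}$, the rings are preserved or reversed ($i\mapsto p-i$) as sets, by induction on dual-graph distance from $F_0$. Thus $\phi$ is determined by three pieces of data: whether it swaps $F_0$ and $F_p$ (at most $2$ choices), the image of the boundary cycle of $F_0$ as a rotation/reflection of a $q$-cycle ($2q$ choices), and nothing else. The map is connected, and an automorphism fixing one flag is trivial. It follows that a flag at $F_0$ together with this data determines $\phi$. This gives $|\Aut(M)|\le 2\cdot 2q=4q$. More directly: fix the flag consisting of $F_0$, the crossing $c(1,0)$, and an edge of $F_0$ at it. Its image must be a flag at $F_0$ or at $F_p$. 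Each of these faces has $2q$ flags, so there are at most $4q$ images, and each determines $\phi$.
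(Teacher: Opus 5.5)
Your proposal is correct and follows the paper's proof. As in the paper, $\varepsilon$ is determined by $g$ and the isometry $g$ by its action on crossings; the pair $\{F_0,F_p\}$ is $\Aut(M)$-invariant by degree, and for $q=3,4$ by the test ``borders only triangles''; and since $\Aut(M)$ acts freely on flags, its order is at most the $2q+2q$ flags of $F_0\cup F_p$.

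Three phrasings need tightening:
\begin{itemize}
\item An isometry of $S^2$ fixing three points need not be the identity. The paper uses that three crossings of ring~$1$ lie on the latitude $h=1\ne p/2$, which is not a great circle.
\item For $q=4$, say explicitly that a quadrilateral of ring $i$ borders a quadrilateral ring because $p$ is odd, hence $p\ne 4$.
\item $F_0$ does share its vertices $c(1,k)$ with faces of ring~$2$, contrary to your claim. This is harmless, since you only use edge-adjacency.
\end{itemize}
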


\begin{proof}
For fixed $g$, $\varepsilon$ is determined: $g$ sends the over-strand of one crossing either to an
over-strand or to an under-strand. So $|\Sym(\Pi)|\le|\Aut(M)|$, the group of map automorphisms of
either orientation; and $g$ is determined by its action on the crossings, since three crossings of
ring~$1$ lie on the circle of latitude $h=1\ne p/2$, which is not a great circle, and an isometry of
$S^2$ fixing three points not on one great circle is the identity. $\Aut(M)$ acts freely on flags (vertex, edge, face), because $M$ is connected.

The pair $\{F_0,F_p\}$ is $\Aut(M)$-invariant:
\begin{itemize}
\item $q\ge5$: $F_0$ and $F_p$ are the only faces of degree $q$.
\item $q=4$ (so $p$ is odd): the degree-$4$ faces are $F_0$, $F_p$ and the quadrilaterals of rings
$2,\dots,p-2$. $F_0$ and $F_p$ border only triangles (rings $1$ and $p-1$). A quadrilateral in
ring $i$ borders rings $i-1$ and $i+1$, and since $p\ne4$ at least one of these is a quadrilateral ring.
\item $q=3$ (so $3\nmid p$, $p\ge4$): the triangles are $F_0$, $F_p$ and rings $1$ and $p-1$.
$F_0$ and $F_p$ border only triangles. A triangle of ring $1$ borders $F_0$ and two faces of ring $2$,
which are quadrilaterals because $p\ge4$; symmetrically, a triangle of ring $p-1$ borders $F_p$ and two
quadrilaterals of ring $p-2$.
\item For $p=3$ there are no rings of quadrilaterals; then $q\ge4$, and $F_0$, $F_p$ are the only $q$-gons.
\end{itemize}
Hence $\Aut(M)$ maps flags of $F_0\cup F_p$ to flags of $F_0\cup F_p$. There are $2q+2q$ of them,
and freeness gives $|\Aut(M)|\le4q$.
\end{proof}

All indices $k$ below are in $\Z_q$. Three kinds of maps:

\medskip
\begin{center}
\small
\begin{tabular}{>{\raggedright\arraybackslash}p{2.7cm} l >{\raggedright\arraybackslash}p{2.6cm} c c}
map & on crossings & on half-edges & $S^2$ & $\varepsilon$\\ \hline
$\rho$, rotation by one period & $c(i,k)\mapsto c(i,k+1)$ & identity & $+$ & $+1$\\[2pt]
$\sigma$, inside-out, angle reversed & $c(i,k)\mapsto c(p-i,-k)$ & $EL\leftrightarrow WU$, $EU\leftrightarrow WL$ & $+$ & $(-1)^{p}$\\[2pt]
$\mu_a$, reflection, $a\in\Z_q$ & $c(i,k)\mapsto c(i,a-(i-1)-k)$ & $EL\leftrightarrow WL$, $EU\leftrightarrow WU$ & $-$ & $-1$
\end{tabular}
\end{center}
\medskip

\begin{lemma}[lower bound]\label{lem:lower}
$\rho$, $\sigma$ and all $\mu_a$ are automorphisms of $M$ with the stated sign $\varepsilon$, and the
$4q$ elements $\rho^k$, $\mu_a$, $\sigma\rho^k$, $\sigma\mu_a$ are distinct. Hence $|\Sym(\Pi)|=4q$.
\end{lemma}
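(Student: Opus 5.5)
The plan is to check the three generating maps directly on the combinatorial data of Section~\ref{sec:diagram}: the crossings $c(i,k)$, the four half-edges $EL,EU,WU,WL$ at each crossing, and the edge list. A map of crossings together with a relabelling of half-edges is an automorphism of $M$ exactly when it sends each edge to an edge, matching the half-edge ends consistently. So for each of $\rho$, $\sigma$, $\mu_a$ I go through the four edge types and check that their images are edges. These types are level~$1$ edges $\{c(1,k)_{E},c(1,k+1)_{W}\}$, level-$p$ edges, and for $2\le j\le p-1$ the two rising/falling edges $\{c(j-1,k),c(j,k)\}$ and $\{c(j,k),c(j-1,k+1)\}$.

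For $\rho$ the check is trivial. For $\sigma$, the ring index changes by $i\mapsto p-i$, so level $j$ goes to level $p+1-j$, and the angle is reversed. The edge $\{c(j-1,k),c(j,k)\}$, which uses the $EU$ end at $c(j-1,k)$ and the $WL$ end at $c(j,k)$, is sent to $\{c(p-j+1,-k),c(p-j,-k)\}$. Under $EU\leftrightarrow WL$, lower/upper and east/west are both swapped, and this is an edge of the second type on level $p+1-j$ with the correct ends. The shift $k\mapsto -k$ pairs correctly with $k+1\mapsto -k-1$.

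For $\mu_a$ the index $a-(i-1)-k$ is designed to absorb the $\tfrac{i-1}{p-1}$ angular offset. I check that $\{c(j-1,k),c(j,k)\}$ maps to $\{c(j-1,a-j+2-k),c(j,a-j+1-k)\}$, which is the edge $\{c(j,k'),c(j-1,k'+1)\}$ with $k'=a-j+1-k$, and conversely. Level~$1$ and level~$p$ edges are immediate.

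The sign $\varepsilon$ is then read off from the crossing data: the rising strand $WL$--$EU$ is over exactly when $i$ is odd.
\begin{itemize}
\item $\rho$ preserves $i$ and the half-edges, so $\varepsilon=+1$.
\item $\mu_a$ swaps $E\leftrightarrow W$ only, so the rising strand $WL$--$EU$ goes to the falling strand $EL$--$WU$ at a crossing of the same ring; hence $\varepsilon=-1$.
\item $\sigma$ sends $WL$--$EU$ to $EU$--$WL$, so the rising strand stays rising, but the ring changes from $i$ to $p-i$. The parity of $i$ changes exactly when $p$ is odd, so $\varepsilon=(-1)^p$.
\end{itemize}
The orientation column on $S^2$ is read from whether the cyclic order $EL,EU,WU,WL$ is preserved or reversed. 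I should double check that $\sigma$, which reverses both angle and radius, preserves orientation, and that $\mu_a$ reverses it.

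For distinctness, I use the argument of Lemma~\ref{lem:upper}: an element is determined by its action on crossings, so it suffices to compare actions on crossings.
\begin{itemize}
\item $\rho^k$ and $\mu_a$ preserve the ring index.
\item $\sigma\rho^k$ and $\sigma\mu_a$ send ring $1$ to ring $p-1\neq1$.
\end{itemize}
Within the first family, $\rho^k$ is orientation preserving and $\mu_a$ is not, and the values $c(1,0)\mapsto c(1,k)$, respectively $c(1,a)$, separate the elements. The second family is handled the same way after composing with $\sigma^{-1}$.

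This gives $4q$ distinct elements of $\Aut(M)$, each carrying a sign $\varepsilon$, so they lie in $\Sym(\Pi)$ provided each is realised by an isometry of the symmetric drawing. Combined with Lemma~\ref{lem:upper}, we get $|\Sym(\Pi)|=4q$.

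The main obstacle is bookkeeping for $\mu_a$: getting the index shift and the half-edge swap to match on the diagonal edges in both directions. The sign of $\sigma$ is the other delicate point, since it depends on the chosen braid convention. Realisability by an isometry is deferred to the drawing of Section~\ref{subsec:drawing}, and here I treat it as given by the combinatorial automorphism.
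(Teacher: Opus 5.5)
Your proposal follows the paper's proof: edge-by-edge checks for $\sigma$ and $\mu_a$, $\varepsilon$ read off from the rising strand and the parities of $i$ and $p-i$, distinctness by ring index, orientation and the image of $c(1,0)$, and realisation deferred to the symmetric drawing of Section~\ref{subsec:drawing}. That drawing uses offsets $\tfrac{i-1}{2}$; the formula $a-(i-1)-k$ does not fit the braid offsets $\tfrac{i-1}{p-1}$ when $p\ge4$.

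Two small repairs are needed. First, $\sigma$ sends $\{c(j-1,k),c(j,k)\}$ to $\{c(p-j,-k),c(p-j+1,-k)\}$, which is an edge of the \emph{first} type on level $p+1-j$ (with $k'=-k$, via the $EU$ and $WL$ ends), not the second. Second, the orientation check you postpone takes one line: $(EL,EU,WU,WL)$ goes to $(WU,WL,EL,EU)$ under $\sigma$, a cyclic shift, and to $(WL,WU,EU,EL)$ under $\mu_a$, a reversal. This same check is what makes the uniform half-edge relabelling respect the rotation at every crossing, so that these maps are automorphisms of $M$ and not only of its graph.
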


\begin{proof}
$\rho$ is clear. $\sigma$ sends level $j$ to level $p+1-j$: the level-$1$ edge $\{c(1,k),c(1,k+1)\}$ goes
to the level-$p$ edge $\{c(p-1,-k),c(p-1,-k-1)\}$ and conversely, and for $2\le j\le p-1$ it maps
$\{c(j-1,k),c(j,k)\}\mapsto\{c(p-j+1,-k),c(p-j,-k)\}$, an edge of the first type on level $p+1-j$, and
$\{c(j,k),c(j-1,k+1)\}\mapsto\{c(p-j,-k),c(p-j+1,-k-1)\}$, an edge of the second type. The half-edge
order $(EL,EU,WU,WL)\mapsto(WU,WL,EL,EU)$ is a cyclic shift, so $\sigma$ preserves the orientation of
$S^2$. The rising strand $\{WL,EU\}\mapsto\{EU,WL\}$ is again rising, now in ring $p-i$, so
over-strands go to over-strands if and only if $i\equiv p-i\pmod 2$, i.e.\ if and only if $p$ is even.

$\mu_a$ fixes the rings. For $c(i,k)\mapsto c(i,a_i-k)$ to send edges to edges one needs
$a_{j-1}=a_j+1$; constant $a_i$ fails. So $a_i=a-(i-1)$. It reverses the half-edge order, so it
reverses the orientation of $S^2$, and it sends the rising strand to the falling one, so $\varepsilon=-1$.

The four families are told apart by whether $F_0$ is fixed and by the orientation on $S^2$; within a
family, the image of $c(1,0)$ differs. Each of them is realised by an isometry of the symmetric drawing (Section~\ref{subsec:drawing}).
Together with Lemma~\ref{lem:upper}, $\Sym(\Pi)=\{\rho^k,\mu_a,\sigma\rho^k,\sigma\mu_a\}$ has exactly
$4q$ elements.
\end{proof}

\begin{lemma}\label{lem:dihedral}
$\Sym(\Pi)\cong D_{2q}$.
\end{lemma}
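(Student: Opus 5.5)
The plan is to produce explicit generators from the list of Lemma~\ref{lem:lower}: an element $T$ of order $2q$, and an involution $s$ outside $\langle T\rangle$ with $sTs=T^{-1}$. Since $|\Sym(\Pi)|=4q$, this gives $\Sym(\Pi)=\langle T\rangle\rtimes\langle s\rangle\cong D_{2q}$. Every map in the table is determined by its action on crossings and half-edges, and by Lemma~\ref{lem:upper} an element is determined by its action on crossings. So every relation below can be checked on the formulas of the table.

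First I record the relations.
\begin{itemize}
\item $\rho$ has order $q$.
\item $\sigma^2=1$, since $c(i,k)\mapsto c(p-i,-k)\mapsto c(i,k)$ and the half-edge swap is an involution.
\item $\sigma\rho\sigma=\rho^{-1}$, since $c(i,k)\mapsto c(i,k-1)$.
\item $\mu_a^2=1$ and $\mu_a\rho\mu_a=\rho^{-1}$.
\item $\mu_a=\rho^a\mu_0$.
\end{itemize}
In particular each $\sigma\rho^k$ is an involution: $(\sigma\rho^k)^2=\rho^{-k}\rho^k=1$.

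Put $\tau=\sigma\mu_0$. On crossings, $\tau(c(i,k))=c(p-i,k+i-1)$, and on half-edges it acts by $EL\leftrightarrow EU$, $WL\leftrightarrow WU$. Hence $\tau^2=\rho^{p-2}$, and $\tau$ commutes with $\rho$, since $\rho$ is inverted by both $\sigma$ and $\mu_0$. Therefore $T_a:=\rho^{-a}\tau$ satisfies $T_a^2=\rho^{p-2-2a}$.

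The key arithmetic step is to choose $a$ with $p-2-2a\equiv1\pmod q$, that is $2a\equiv p-3$. If $q$ is odd, $2$ is invertible mod $q$. If $q$ is even, then $p$ is odd because $\gcd(p,q)=1$, so $p-3$ is even and $a=(p-3)/2$ works. With this choice $T:=T_a$ satisfies $T^2=\rho$. So $T$ has order $2q$, because $T$ itself is not a power of $\rho$: it exchanges $F_0$ and $F_p$. Thus $C=\langle T\rangle$ has index $2$ and contains all $\rho^k$ and all $\sigma\mu_b$.

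Take $s=\sigma$. It is an involution, it lies outside $C$, and $sTs\in C$ because $C$ has index $2$. Moreover $(sTs)^2=s\rho s=\rho^{-1}$. The elements $T^j$ with $T^{2j}=\rho^{-1}$ satisfy $2j\equiv-2\pmod{2q}$, so $sTs\in\{T^{-1},T^{q-1}\}$. For $q$ odd, $T^{q-1}$ is an even power of the orientation-reversing element $T$, hence orientation-preserving. Conjugation preserves orientation type, so $sTs=T^{-1}$.

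The main obstacle is the case $q$ even, where both candidates reverse orientation and both swap $F_0$ and $F_p$. There I will compute $\sigma T\sigma=\rho^{a}\,\mu_0\sigma$ directly on $c(1,0)$ and compare it with $T^{-1}(c(1,0))=\tau^{-1}\rho^{a}(c(1,0))$. These two candidates differ by the central involution $T^q=\rho^{q/2}$, which moves $c(1,0)$, so one evaluation decides between them. I expect it to give $T^{-1}$. If it did not, I would replace $s$ by another involution $\sigma\rho^k$ and repeat the computation.
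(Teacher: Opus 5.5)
This is essentially the paper's proof. Your $T=\rho^{-a}\sigma\mu_0$ equals the paper's $\tau=\sigma\mu_a$, you make the same choice $2a\equiv p-3\pmod q$ and give the same order argument, and you conjugate by $\sigma$ where the paper conjugates by $\mu_0$. For $\sigma\notin\langle T\rangle$, you should say that $\langle T\rangle=\{\rho^k\}\cup\{\sigma\mu_b\}$, which Lemma~\ref{lem:lower} separates from $\sigma$.

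You leave the even-$q$ conjugation as an expectation, but it is immediate and removes the need for your case distinction. Since $T=\sigma\mu_a$ is a product of two involutions, $\sigma T\sigma=\mu_a\sigma=T^{-1}$ for every $q$. So you can drop the orientation argument for odd $q$, the evaluation at $c(1,0)$, and the fallback to other $\sigma\rho^k$.
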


\begin{proof}
Let $\tau=\sigma\mu_a$. Then $\tau(c(i,k))=c(p-i,k+i-1-a)$ and $\tau^2=\rho^{\,p-2-2a}$. If $q$ is
odd, choose $2a\equiv p-3\pmod q$; then $\tau^2=\rho$. If $q$ is even, $p$ is odd, $p-3$ is even, and
$2a\equiv p-3\pmod q$ is solvable; again $\tau^2=\rho$, of order $q$. Moreover $\tau^q\ne1$: $\sigma$
exchanges $F_0$ and $F_p$ and $\mu_a$ fixes both, so odd powers of $\tau$ exchange them ($q$ odd), and
$\tau^q=\rho^{q/2}\ne1$ ($q$ even). So $\tau$ has order $2q$.
Further, $\mu_0$ is an involution and lies outside $\langle\tau\rangle$, because it fixes $F_0$ and
reverses the orientation of $S^2$, while the elements of $\langle\tau\rangle$ fixing $F_0$ are the
rotations $\rho^k$. Finally $\mu_0\tau\mu_0=\tau^{-1}$: both send $c(i,k)$ to $c(p-i,k+a+i+1-p)$.
Hence $\Sym(\Pi)=\langle\tau\rangle\rtimes\langle\mu_0\rangle\cong D_{2q}$.
\end{proof}

\begin{remark}[chirality]\label{rem:chirality}
An element $(g,\varepsilon)$ acts on $S^3$ preserving orientation if and only if
$\operatorname{sign}(g)\cdot\varepsilon=+1$ (Section~\ref{sec:reduction}). For $p$ even no element of
the table, hence none of $\Sym(\Pi)$, has $\operatorname{sign}(g)\cdot\varepsilon=-1$. For $p$ odd,
$\sigma$ has $\operatorname{sign}(g)=+$ and $\varepsilon=-1$. With Theorem~\ref{thm:main} this
gives Corollary~\ref{cor:chiral}.
\end{remark}

\section{From the knot to the diagram}
\label{sec:reduction}

\subsection{Conventions}
$\Sym(S^3,K)$ is defined in~\cite[\S5.2]{DS} as the mapping class group of the knot exterior $E$.
We prove the statement both for the pair $(S^3,K)$ and for $E$ (Section~\ref{subsec:pair}). Let $K=\Th(p,q)$ with $p,q\ge3$, $\gcd(p,q)=1$; $K$ is hyperbolic
(Section~\ref{sec:diagram}).

\emph{Category.} As in~\cite{MT93}, links are tame and homeomorphisms of pairs are piecewise linear
(they use this on p.~125: ``since $f$ is piecewise linear''); the proofs of Lemmas~\ref{lem:T}
and~\ref{lem:S} use transversality and the Alexander trick in that category. Isometries of the round
model enter in the same way as the rigid rotations and isometric crossing balls of~\cite[p.~115]{MT93}
and~\cite[p.~405]{MT91}. We rely on the classical fact that for tame links in $S^3$ the classification of
homeomorphisms of pairs up to pairwise isotopy does not depend on the choice between the topological,
piecewise linear and smooth categories, and do not reprove it.

\emph{Model of $S^3$.} $S^3=\{(y,z)\in\R^3\times\R:|y|^2+z^2=1\}$, with projection sphere
$S^2=\{z=0\}$. A point ``over $u\in S^2$ at height $z$'' is $(\sqrt{1-z^2}\,u,z)$. A pair
$(g,\varepsilon)$ with $g\in O(3)$, $\varepsilon=\pm1$ acts by $(y,z)\mapsto(gy,\varepsilon z)$; it
preserves the orientation of $S^3$ if and only if $\det(g)\cdot\varepsilon=+1$.
Stereographic projection from $(0,1)$, $\mu(y,z)=y/(1-z)$, identifies $S^3$ with $\R^3\cup\{\infty\}$, the
ambient space of~\cite[p.~114]{MT93}. It is the identity from $S^2$ onto the unit sphere $\{\|x\|=1\}$ on
which their diagrams lie~\cite[p.~115]{MT93}, and it maps the point over $u$ at height $z$ to
$\sqrt{(1+z)/(1-z)}\,u$; so it carries the product structure $(u,z)$ near $S^2$ to their radial one, with the
increasing change of parameter $z\mapsto\sqrt{(1+z)/(1-z)}$, $0\mapsto1$. Being conformal, $\mu$ maps metric
balls of $S^3$ whose closure does not contain $(0,1)$ to Euclidean balls.

\emph{Realised diagram $\lam(\Pi)$, star model.} Take the symmetric drawing of
Section~\ref{subsec:drawing}. Inside a small disc of spherical radius $r$ around each crossing $c$,
replace the drawing by the four geodesic rays from $c$ to the points where the edges meet the circle
of radius $r$. The over-strand is the union of two rays opposite in the cyclic order at $c$, lifted to height $z=+\beta(d)$, $d$
the spherical distance to $c$; the under-strand is lifted to $z=-\beta(d)$, with
$\beta(d)=\sqrt{r^2-d^2}$. Up to pairwise isotopy this is the semicircle profile of the crossing balls of Menasco and
Thistlethwaite~\cite[p.~114]{MT93}: in the round metric the top point of the over-strand lies at distance
$\arcsin r>r$ from $c$, so the strands lie on the boundary of the metric ball of radius $r$ about $c$ only after an isotopy. An isometry $g$ preserving the drawing maps rays from $c$ to rays
from $g(c)$ and preserves $d$; so $(g,\varepsilon)$ maps $\lam(\Pi)$ to itself exactly when it sends
over-strands to over-strands ($\varepsilon=+1$) or to under-strands ($\varepsilon=-1$).

Menasco and Thistlethwaite require crossing balls isometric to a ``standard pair'' whose two strands are
semicircles on the boundary of the ball, in perpendicular planes~\cite[p.~405]{MT91}; they project to
perpendicular diameters. The map $\mu$, followed by a fixed homeomorphism of pairs that has product form $k\times\mathrm{id}$
near the unit sphere, $k$ orientation-preserving, carries the star model to theirs up to pairwise isotopy: $k$ straightens each pair of opposite rays to a diameter and makes the two
diameters perpendicular, and it is radial in each crossing disc (in exponential coordinates at $c$ it
maps rays to rays and preserves $d$), so the semicircle profile carries over, up to the change of height
parameter. All statements below are
about pairwise isotopy classes, so they transfer along it.

\emph{The classes $\cF$ and $\cFpm$.} Fix a product neighbourhood $N\cong S^2\times[-\zeta,\zeta]$
of $S^2$ containing all realised diagrams in question. $\cFpm$ is the set of homeomorphisms of pairs
that on $N$ have the form $h_0\times(z\mapsto\varepsilon z)$, with $h_0$ a homeomorphism of $S^2$ and
$\varepsilon=\pm1$; $\cF\subset\cFpm$ is the subset with $\operatorname{sign}(h_0)\cdot\varepsilon=+1$.
Both are closed under composition, because $N$ is common. In the terms of~\cite{MT93}:
\begin{itemize}
\item $\varepsilon=+1$ is \emph{flat}. Their definition~\cite[p.~116]{MT93}: $g$ is flat if it is
pairwise isotopic to a homeomorphism $h$ with $h(N)=N$ and $h|_N=h_0\times\mathrm{id}_{[1/2,3/2]}$,
$h_0$ an orientation-preserving homeomorphism of $S^2$, where $N=\{1/2\le\|x\|\le3/2\}\subset\R^3\cup\{\infty\}$.
Their model is the unit sphere in $\R^3\cup\{\infty\}$ with the radial product structure on
$N$~\cite[pp.~114--116]{MT93} (the suspension picture is in~\cite[p.~405]{MT91}); $\mu$ carries our product structure to it, and a radial homeomorphism
$x\mapsto\phi(\|x\|)\,x/\|x\|$, $\phi$ increasing with $\phi(1)=1$, carries $\mu(N)$ onto their $N$. Together with
$k\times\mathrm{id}$ this fixed transfer carries maps of the form $h_0\times\mathrm{id}$, $h_0$ orientation-preserving,
to maps of the same form. So the definition itself gives product form on $N$.
\item $\varepsilon=-1$ in $\cF$ (so $\operatorname{sign}(h_0)=-1$) is $R\circ{}$flat, $R$ a rigid rotation through $\pi$ about an axis in $S^2$.
\end{itemize}

Let $\Phi\colon\Sym(\Pi)\to\Sym(S^3,K)$ send $(g,\varepsilon)$ to the class of the orthogonal map $(g,\varepsilon)$ of
Section~\ref{subsec:drawing}. We show that $\Phi$ is bijective.

\subsection{A symmetric drawing}\label{subsec:drawing}
Draw $\Pi$ on the cylinder $S^1\times(0,p)$, with angle $\theta\in\R/q\Z$ and height $h$, embedded
equivariantly in the round $S^2$: $\theta\mapsto$ longitude $2\pi\theta/q$, $h\mapsto$ latitude, odd
about $h=p/2$. The crossing $c(i,k)$ sits at $(\theta,h)=(k+\tfrac{i-1}{2},\,i)$. Edges are straight
segments in $(\theta,h)$: on level $j$, $2\le j\le p-1$, from $(k+\tfrac{j-2}{2},j-1)$ to
$(k+\tfrac{j-1}{2},j)$ and on to $(k+\tfrac j2,j-1)$, a zig-zag with steps $\Delta\theta=\tfrac12$;
level $1$ is the horizontal segment at $h=1$ and level $p$ the one at $h=p-1$. Two zig-zags in adjacent
strips meet only at their common vertices $c(j,k)$, where a peak of the lower strip meets a valley of
the upper one. The four rays there are $SE,NE,NW,SW=EL,EU,WU,WL$, in the order of the combinatorial
model (at rings $1$ and $p-1$ the rays along level $1$, resp.\ level $p$, are horizontal, $E$ and $W$;
the cyclic order is the same); so this is a planar drawing of $M$. (Offsets $\tfrac{i-1}{2}$ instead of the angular
$\tfrac{i-1}{p-1}$ of the braid picture make $\mu_a$ rigid.)

The generators are affine in $(\theta,h)$, of the form $\theta\mapsto\pm\theta+\mathrm{const}$ with $h\mapsto h$ or
$h\mapsto p-h$, hence isometries of $S^2$:
$\rho\colon\theta\mapsto\theta+1$; $\mu_a\colon\theta\mapsto a-\theta$, $h$ fixed, so that
$c(i,k)\mapsto c(i,a-(i-1)-k)$ since $a-(k+\tfrac{i-1}2)=(a-(i-1)-k)+\tfrac{i-1}2$; and
$\sigma\colon\theta\mapsto\tfrac{p-2}{2}-\theta$, $h\mapsto p-h$, so that $c(i,k)\mapsto c(p-i,-k)$,
a rotation by $\pi$ about an equatorial axis. They permute the vertices as in
Section~\ref{sec:count} and preserve the drawing, since affine maps send segments to segments. With
$\varepsilon$ as there they satisfy the over/under rule, so they preserve $\lam(\Pi)$. Thus
$G:=\langle\rho,\sigma,\mu_a\rangle\cong\Sym(\Pi)$ is a group of $4q$ orthogonal maps of
$(S^3,\lam(\Pi))$.

\subsection{Injectivity}
Let $E$ be the exterior of $K$, the complement of an open $G$-invariant regular neighbourhood, e.g.\ a
small metric $\epsilon$-neighbourhood ($G$ acts by isometries; $\lam(\Pi)$ has corners where the
semicircles meet $S^2$).
$G$ acts on $E$ and preserves $\partial E$. Since $K$ is hyperbolic, $E$ is irreducible and aspherical
with incompressible torus boundary, and $K$ is prime (a composite knot has an essential torus);
$\pi_1(E)$ is torsion-free and non-elementary Kleinian, hence centreless.

Let $DE=E\cup_{\partial E}E$ be the double; $G$ acts effectively on $DE$ by the same map on both
copies. $DE$ is a closed aspherical $3$-manifold (gluing aspherical spaces along a $\pi_1$-injective
aspherical torus), and $\pi_1(DE)=\pi_1(E)*_{\Z^2}\pi_1(E)$. Since $\Z^2\ne\pi_1(E)$, the centre of
this amalgam lies in $\Z^2$ and is central in $\pi_1(E)$, hence trivial. Borel's theorem, in the
restatement of Cappell, Weinberger and Yan~\cite[pp.~1--2]{CWY}: if the fundamental group of a closed
aspherical manifold is centreless, the finite groups that act effectively act as outer automorphisms
of the fundamental group. So $G\to\Out(\pi_1(DE))$ is injective. We use the theorem only in this restatement; \cite[p.~10]{CWY}
trace it to Borel and attribute the proof for manifolds to Conner and Raymond.

\begin{proposition}\label{prop:inj}
$\Phi$ is injective.
\end{proposition}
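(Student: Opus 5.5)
The plan is to show that the composite $G\to\Sym(S^3,K)\to\Out(\pi_1(DE))$ is injective, where $G\cong\Sym(\Pi)$ is the group of orthogonal maps of Section~\ref{subsec:drawing}; injectivity of $\Phi$ then follows. The preceding paragraph gives injectivity of $G\to\Out(\pi_1(DE))$ via Borel's theorem, so what remains is to check that this map factors through $\Phi$. That is, if two elements $g_1,g_2\in G$ give the same class in $\Sym(S^3,K)$, they must induce the same outer automorphism of $\pi_1(DE)$. Replacing $g_1,g_2$ by $g=g_2^{-1}g_1$, it suffices to show: if $g\in G$ is isotopic to the identity as a homeomorphism of pairs (or, in the convention of~\cite{DS}, as a homeomorphism of $E$), then $g$ acts on $\pi_1(DE)$ by an inner automorphism. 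Borel then gives $g=1$.

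The steps, in order, are as follows.

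\begin{enumerate}[label=(\roman*)]
\item Pass from a pairwise isotopy of $(S^3,K)$ to an isotopy of $E$. I would use uniqueness of regular neighbourhoods to push the isotopy so that it preserves the $G$-invariant neighbourhood. The restriction of $g$ to $E$ is then isotopic to the identity of $E$, through homeomorphisms that need not be the identity on $\partial E$. If the definition used is the mapping class group of $E$ directly, this step is immediate.
\item An isotopy $g|_E\simeq\mathrm{id}_E$ gives that $g_*$ is inner on $\pi_1(E)$. More precisely, with a basepoint $x_0\in\partial E$, $g_*$ equals conjugation by the class of the track path of $x_0$, which lies in $\partial E$. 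Hence $g_*$ agrees, up to this conjugation, with the identity on the peripheral subgroup $\Z^2$ as well.
\item Double the isotopy. Both copies carry the same map and the same isotopy, restricted to $\partial E$, so the two isotopies glue to an isotopy of $DE$ from the doubled $g$ to the identity. The isotopy moves $\partial E$ within itself, and $g$ acts identically on both copies, so the gluing is compatible. Consequently the doubled map induces an inner automorphism of $\pi_1(DE)=\pi_1(E)*_{\Z^2}\pi_1(E)$, namely conjugation by the same boundary track path.
\item Apply Borel's theorem in the form quoted: $G\to\Out(\pi_1(DE))$ is injective, so $g=1$ in $G$, and hence in $\Sym(\Pi)$.
\end{enumerate}

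The main obstacle is step (iii), namely ensuring the isotopy can be taken to preserve $\partial E$ setwise at every time, so that it doubles. I would handle this first by isotoping so that the isotopy fixes a tubular neighbourhood of $K$ setwise, using the PL category and uniqueness of regular neighbourhoods. A cleaner fallback, if that fails, is to avoid isotopies altogether. One can use only the homotopy-theoretic consequence that $g_*$ on $\pi_1(E)$ is conjugation by a peripheral element $\gamma$. The doubled map then acts on each factor of the amalgam as conjugation by the same $\gamma\in\Z^2$, and therefore by conjugation by $\gamma$ on the whole amalgam. For this I need that an isotopy of $E$ restricted to $\partial E$ gives a track path in $\partial E$, which holds because homeomorphisms of $E$ preserve $\partial E$.

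For the orientation question ("for both orientations"), no separate argument is needed. Elements of $G$ reversing orientation of $S^3$ still act on $DE$, and Borel's theorem does not require orientation preservation. So the argument applies uniformly to all $4q$ elements.
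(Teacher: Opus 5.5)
Your main route is correct, but it differs from the paper's at the key step.

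\textbf{The paper's route.} The paper uses only a \emph{homotopy} $g|_E\simeq\mathrm{id}$. It gets this by restricting the pairwise isotopy to $S^3\setminus K$ and composing with a deformation retraction onto $E$. That homotopy does not keep $\partial E$ in place, so it only shows that $g_*$ is conjugation by some $\gamma\in\pi_1(E)$, with no control over $\gamma$. The paper then uses that $g$ itself preserves $\partial E$. Hence $g_*$, formed with a path in $\partial E$, maps the peripheral subgroup $P$ onto itself, so $\gamma P\gamma^{-1}=P$. Malnormality of $P$ (a maximal parabolic subgroup) then forces $\gamma\in P$. Only then does conjugation by $\gamma$ on both factors of $A*_PA$ give an inner automorphism of $\pi_1(DE)$.

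\textbf{Your route.} You first upgrade the pairwise isotopy to an isotopy through homeomorphisms of $E$ and then double it. So $g_D$ is isotopic to the identity of $DE$, and no group theory is needed before applying Borel. This trades malnormality for the fact that a pairwise isotopy can be made to preserve a regular neighbourhood throughout. The paper invokes that same fact in Section~\ref{subsec:pair} to make $\mathrm{MCG}(S^3,K)\to\mathrm{MCG}(E)$ well defined. As you note, it is automatic under the exterior convention of~\cite{DS}. The paper's route needs only a homotopy plus malnormality, and so avoids any parametrised regular-neighbourhood argument.

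\textbf{The fallback.} Your fallback does not remove the dependence on step~(i). An isotopy through homeomorphisms of $E$ preserves $\partial E$ automatically. So the obstacle you place in step~(iii) is really step~(i). The fallback again assumes ``an isotopy of $E$'', which is step~(i) itself. A genuinely isotopy-free argument yields only a homotopy, and the basepoint's track under it can leave $\partial E$. So the claim that $g_*$ is conjugation by a \emph{peripheral} element is not a homotopy-theoretic consequence. The missing ingredient is exactly the malnormality of $P$ that the paper uses.
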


\begin{proof}
Suppose $g\in G$ is pairwise isotopic to the identity on $(S^3,K)$. Then $g|_E$ is homotopic to the
identity (restrict the isotopy to $S^3\setminus K$ and compose with a deformation retraction
$S^3\setminus K\to E$), so $g_*$ is inner on $A:=\pi_1(E)$, say conjugation by $\gamma$, with base point
$x_0\in\partial E$, where $g_*$ is formed using a path from $x_0$ to $g(x_0)$ inside $\partial E$
(possible since $g$ preserves $\partial E$). Then $g_*$ maps the peripheral subgroup $P\cong\Z^2$ onto
itself, so $\gamma P\gamma^{-1}=P$ meets $P$ non-trivially. The peripheral subgroup of a hyperbolic
knot group is malnormal (a maximal parabolic subgroup), hence $\gamma\in P$. On
$\pi_1(DE)=A*_PA$, the double $g_D$ acts as conjugation by the same $\gamma\in P$ on both factors, i.e.\
as conjugation by the image of $\gamma$; so $g_D$ induces the trivial outer automorphism. By Borel,
$g_D=\mathrm{id}$, so $g=1$.
\end{proof}

This covers orientation-reversing elements as well. No orbifold theorem, no Mostow rigidity and no
equivariant sphere or torus theorem is used.

\subsection{Surjectivity, orientation-preserving symmetries}
We use the definitions of~\cite[\S1, pp.~115--116]{MT93}.
A \emph{standard flype}~\cite[p.~115]{MT93}: in the pattern of their Fig.~1.1(i) four balls
$S_x,S_A,S_y,S_B$ sit in a row; $S_x$ is a crossing ball, $S_y$ meets the link in two parallel
segments, and the two left ends of $S_x$ are joined to the two right ends of $S_B$ by two outer arcs.
The flype rotates the tangle in $S_A$ rigidly through $\pi$ about an axis in the projection plane,
fixes $S_B$ pointwise, and maps $S_x$ and $S_y$ into themselves by half-twists. A \emph{flype} is
$g_1\circ f'\circ g_2$ with $f'$ standard and $g_1,g_2$ flat~\cite[p.~116]{MT93}; and ``since the
property of being flat depends only on the pairwise isotopy class of a homeomorphism, the same is true
for the property of being a flype''~\cite[p.~116]{MT93}.

\begin{theorem}[{Menasco--Thistlethwaite~\cite[Main Theorem, p.~116]{MT93}}]\label{thm:MT}
Let $D_1$ and $D_2$ be reduced, prime, oriented, alternating link diagrams and let there be an
orientation-preserving homeomorphism of pairs $f\colon(S^3,\lam(D_1))\to(S^3,\lam(D_2))$. Then $f$ is
a composite of flypes.
\end{theorem}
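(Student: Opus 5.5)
This statement is not proved here; it is the Main Theorem of Menasco and Thistlethwaite~\cite[p.~116]{MT93}, quoted verbatim in substance. What remains to check is that the version we quote is the one they prove, in our conventions. I would split this into three checks, in order.

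\emph{First, the realisation.} Their theorem concerns the realised link $\lam(D)$ built from crossing balls in standard position on the unit sphere of $\R^3\cup\{\infty\}$~\cite[pp.~114--115]{MT93}. We use the star model on the round $S^2\subset S^3$ instead. By the Conventions above, $\mu$ followed by the fixed transfer $k\times\mathrm{id}$ carries the star model to theirs up to pairwise isotopy. An orientation-preserving homeomorphism of pairs $f$ between star models therefore conjugates to one between their models. Their conclusion then pulls back, provided ``composite of flypes'' is invariant under this conjugation and under pairwise isotopy.

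\emph{Second, the invariance.} This is exactly the sentence quoted from~\cite[p.~116]{MT93}: flatness, and hence being a flype, depends only on the pairwise isotopy class. The transfer carries product maps $h_0\times\mathrm{id}$, with $h_0$ orientation-preserving, to maps of the same form. So flat maps correspond to flat maps. Standard flypes correspond to standard flypes up to flat factors, since a standard flype is supported in a ball meeting the diagram in the pattern of their Fig.~1.1, and that pattern is preserved by the transfer.

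\emph{Third, the hypotheses.} The diagrams must be reduced, prime, oriented and alternating. For $\Pi$ these hold by Section~\ref{sec:diagram}, Lemma~\ref{lem:B2} and Remark~\ref{rem:diagram-prime}. The orientation of $K$ is chosen arbitrarily, and their theorem is applied with $D_1=D_2=\Pi$.

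For the reader's orientation I would recall the shape of their argument, without reproducing it. One places an incompressible, meridionally incompressible surface (the checkerboard surfaces, or a Conway sphere pulled back by $f$) in standard position relative to the crossing balls. One then analyses the curves of intersection with the two hemispheres, and shows that each ``bubble'' configuration forces either a flat map or a flype. The main obstacle, if one wanted more than a citation, is this combinatorial standard-position analysis, which is the heart of~\cite{MT93} and which we deliberately use as a black box. Of the checks above, the delicate one is the first: it requires that the star model and the perpendicular-semicircle model are pairwise isotopic by a map of product form near $S^2$. I would settle it using the radial structure of $k$ in each crossing disc, as described in the Conventions.
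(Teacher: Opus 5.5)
Your proposal is correct and is essentially what the paper does: Theorem~\ref{thm:MT} is cited as a black box, the star model is transferred to the model of \cite{MT93} along $\mu$ and $k\times\mathrm{id}$, and everything rests on flatness and flypes depending only on the pairwise isotopy class; the paper makes this last point explicit by writing $f=(f\circ h^{-1}\circ f_n)\circ f_{n-1}\circ\dots\circ f_1$, which passes between the isotopy form of \cite[Theorem~1]{MT91} and the form stated. Two small corrections: first, the transfer preserves the pattern of Fig.~1.1 only topologically, not the round balls or the rigid rotation of $S_A$, so the correspondence of standard flypes should rest, as in the paper, on the round reading together with the fact that any two standard flypes for one pattern are pairwise isotopic \cite[p.~115]{MT93}; second, your third check belongs to the application rather than to the theorem, and there the theorem is needed not only with $D_1=D_2=\Pi$ but also with $D_2$ equal to $\Pi$ reversed (Proposition~\ref{prop:surj-plus}) and to $\Pi^*$ (Proposition~\ref{prop:surj-minus}).
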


The announcement~\cite[Theorem~1]{MT91} ends ``Then $f$ is pairwise isotopic to a composite of
flypes''. The two forms are equivalent: if $f\simeq h=f_n\circ\dots\circ f_1$, then
$f=(f\circ h^{-1}\circ f_n)\circ f_{n-1}\circ\dots\circ f_1$, and the first factor is pairwise isotopic
to $f_n$, hence a flype by the invariance just quoted. \cite{MT93} uses the isotopy form in the proof
of Proposition~4.10 (p.~148).

\emph{Degenerate flypes}~\cite[p.~116]{MT93}: ``if the tangle $A$ of Figure~1.1 contains no crossing,
then the standard flype determined by that figure is a flat homeomorphism''. For $S_B$ without
crossings the paper speaks only of diagrams~\cite[p.~113]{MT93}: ``the distortion amounts merely to a
rotation through $\pi$ of the complete link diagram about an axis in the projection $2$-sphere''. So the
$S_A$-empty case of Lemma~\ref{lem:F} is in~\cite{MT93}; the $S_B$-empty case is proved here.

\begin{lemma}[trivial tangles are rigid rel boundary]\label{lem:T}
Let $B$ be a $3$-ball and $\tau\subset B$ a trivial (rational) $2$-string tangle. Every homeomorphism of
pairs $f\colon(B,\tau)\to(B,\tau)$ with $f|_{\partial B}=\mathrm{id}$ is isotopic to the identity rel
$\partial B$, through homeomorphisms of pairs.
\end{lemma}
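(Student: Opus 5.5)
We reduce to the case of a product tangle and then apply the Alexander trick twice, once in a disc and once in a ball. A trivial $2$-string tangle is, by definition, pairwise homeomorphic to $(D^2\times[0,1],\{x_1,x_2\}\times[0,1])$, with $x_1,x_2$ interior points of the disc $D^2$. Conjugating by such a homeomorphism, we may assume $(B,\tau)$ is this standard model, with $\partial B=D^2\times\{0,1\}\cup\partial D^2\times[0,1]$. Since $f$ is the identity on $\partial B$, it fixes the four endpoints of $\tau$. It therefore maps each string to itself, because it fixes both endpoints of each string. We work in the PL category throughout.

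\emph{Step 1: straighten $f$ on a separating disc.} Let $E\subset B$ be a properly embedded disc, $E=\{y\}\times[0,1]\cup$ (an arc in $D^2$ from $\partial D^2$ through $y$ to $\partial D^2$, times $[0,1]$). Choose $E$ to separate the two strings, so that $B\setminus E$ consists of two balls $B_1,B_2$ with $\tau\cap B_j$ a single unknotted arc. The image $f(E)$ is a properly embedded disc in $B\setminus\tau$ with the same boundary as $E$. The exterior of $\tau$ in $B$ is a genus-$2$ handlebody, and $E$ gives a compressing disc in it. We make $f(E)$ transverse to $E$ rel $\partial E$ and remove the intersection curves by innermost-disc arguments. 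The cut-and-paste moves take place in the irreducible manifold $B\setminus N(\tau)$, and they are realised by an isotopy fixing $\partial B$ and $\tau$. After these moves $f(E)\cap E=\partial E$. Then $E\cup f(E)$ bounds a ball in $B\setminus N(\tau)$; this needs that it does not enclose a string, which holds because $E$ and $f(E)$ separate the strings in the same way, as $f$ fixes each string. An isotopy across this ball gives $f(E)=E$. A further isotopy of $E$ rel $\partial E$ gives $f|_E=\mathrm{id}$; this is the Alexander trick for the disc $E$, and it extends to a collar of $E$ away from $\tau$.

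\emph{Step 2: reduce to a single arc in a ball.} Now $f$ restricts to $f_j\colon(B_j,\alpha_j)\to(B_j,\alpha_j)$, $j=1,2$, with $f_j$ the identity on $\partial B_j$ and $\alpha_j$ an unknotted proper arc. We model $(B_j,\alpha_j)$ as $(D^2\times[0,1],\{0\}\times[0,1])$. First we isotope $f_j|_{\alpha_j}$ to the identity rel endpoints by a straight-line isotopy along the arc, and extend this isotopy to $B_j$ rel $\partial B_j$ using a tubular neighbourhood of $\alpha_j$. Next we make $f_j$ the identity on a regular neighbourhood $N(\alpha_j)$. Here $f_j$ acts on the normal disc bundle, which is trivial over the interval, by a path in $\mathrm{PL}^+(D^2)$ fixed at the ends. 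This path is contractible rel endpoints by the Alexander trick in families, so it can be undone.

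\emph{Step 3: the complement.} The closure of $B_j\setminus N(\alpha_j)$ is a solid torus. The map $f_j$ is the identity on its whole boundary torus. Such a homeomorphism is isotopic to the identity rel boundary: cut along a meridian disc, repeat the argument of Step 1, and finish with the Alexander trick on the resulting $3$-ball.

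We expect the main obstacle to be Step 1, namely making the innermost-disc isotopies honest rel $\partial B$ and $\tau$ in the PL category. We would handle it with the standard lemma that two incompressible, properly embedded discs with common boundary in an irreducible $3$-manifold are isotopic rel boundary. That lemma is itself proved by transversality and the Alexander trick, consistent with the conventions of Section~\ref{sec:reduction}.
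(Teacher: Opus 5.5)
Your Step~1 is the paper's steps (1)--(3): transversality, innermost discs in the irreducible handlebody $B\setminus N(\tau)$, isotopy across the ball bounded by $E\cup f(E)$, and the Alexander trick on the disc. Your Step~3 is a correct standard argument. The gap is in Step~2b, not in Step~1 where you expected it.

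After Step~2a, $f_j$ fixes $\alpha_j$ pointwise, but nothing makes it preserve a regular neighbourhood $N(\alpha_j)$, let alone map normal discs to normal discs. So there is no ``action on the normal disc bundle''. Uniqueness of regular neighbourhoods lets you arrange $f_j(N)=N$ after an isotopy. But making $f_j|_N$ fibre-preserving, or the identity, is a statement about $(N,\alpha_j)\cong(D^2\times I,\{0\}\times I)$ with $f_j$ fixed only on the core and the two end discs. That is the very statement you are proving, with less boundary control, so the step is circular.

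Even granting a fibrewise action $s\mapsto f_s\in\mathrm{PL}^+(D^2)$ with $f_0=f_1=\mathrm{id}$, this is a loop, and $s\mapsto f_s|_{\partial D^2}$ has a winding number $n\in\Z$. That number is invariant under homotopy rel endpoints. The Alexander trick in families contracts only families fixing $\partial D^2$, and the intermediate $f_s$ need not fix it. On the lateral annulus of $N$ the winding appears as $T^n$, the $n$-th power of the Dehn twist about a meridian of $\alpha_j$.

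In fact $n=0$, but for a reason you do not give. That meridian is a longitude $\lambda$ of the complementary solid torus $V$. Hence $f_j$ would carry the boundary $\mu$ of a meridian disc of $V$ to a curve of class $\mu\pm n\lambda$, which bounds a disc in $V$ only if $n=0$.

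None of this detour is needed. Once $E$ cuts $(B,\tau)$ into two pairs (ball, unknotted arc) with $f$ the identity on their boundaries, identify each with (round unit ball, diameter). Then use the conical Alexander trick: $F_t(x)=t\,f(x/t)$ for $|x|\le t$ and $F_t(x)=x$ for $|x|\ge t$. The diameter is a cone from the centre, so every $F_t$ preserves it, and $F_1=f$, $F_0=\mathrm{id}$. This is the paper's step (4), and it replaces your Steps~2 and~3 in one line.
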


\begin{proof}
Let $D\subset B$ be a properly embedded disc separating the two strings, with
$\partial D\subset\partial B\setminus\tau$. (1) $f(D)$ has the same boundary; isotope it rel $\partial$
to be transverse to $D$, so $D\cap f(D)$ consists of $\partial D$ and closed curves. (2) Remove the
closed curves by the innermost-disc argument in the handlebody $B\setminus N(\tau)$, which is
irreducible; each innermost pair of discs bounds a ball missing $\tau$. Then $D\cup f(D)$ is a sphere in the
handlebody; it bounds a ball there, across which $f(D)$ is isotoped onto $D$. (3) Now $f(D)=D$; isotope $f$
rel $\partial D$ to be the identity on $D$ (Alexander trick on the disc). (4) $D$ cuts $(B,\tau)$ into
two balls, each with one unknotted proper arc, and $f$ is the identity on their boundaries. Identify
each with (unit ball, diameter) and apply the conical Alexander trick $F_t(x)=t\,f(x/t)$; it preserves
the diameter, which is a cone.
\end{proof}

\begin{lemma}[only degenerate flypes]\label{lem:A3}
Let $D$ be reduced, alternating, polyhedral (Lemma~\ref{lem:B1}) and without bigons. Then in every
standard-flype pattern on $D$ the tangle in $S_A$ or the tangle in $S_B$ has no crossing.
\end{lemma}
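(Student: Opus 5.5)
The plan is to read off from the standard-flype pattern of Fig.~1.1(i) a Haseman circle of $D$, and then apply Lemma~\ref{lem:B1}. In the pattern the four balls $S_x,S_A,S_y,S_B$ sit in a row. The tangle $A$ in $S_A$ meets the link in four points, so $\partial S_A$ projects to a Haseman circle $\gamma_A$. Likewise the union $S_A\cup S_y\cup S_B$, together with the connecting arcs between them, is a ball $S_{AyB}$ whose boundary meets the diagram in four points: the two arcs entering from $S_x$ on one side and the two outer arcs on the other. Its complement is a neighbourhood of the single crossing ball $S_x$, possibly together with the outer arcs. So $\partial S_{AyB}$ is a Haseman circle bounding a disc with exactly one crossing, and it carries no information. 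The useful circle is instead $\gamma_{AB}$, the boundary of a disc containing $S_A$, $S_y$ and $S_B$ but chosen to separate $S_A$ from $S_B$.

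More precisely, I would take the circle $\gamma_B=\partial S_B$, which meets $D$ in four points. By Lemma~\ref{lem:B1}, one side of $\gamma_B$ contains at most one crossing. I would split into cases according to which side that is.

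\textbf{Case 1: the inside of $\gamma_B$ is small.} If $S_B$ contains no crossing, we are done. If it contains exactly one crossing, then $B$ is a single crossing tangle. Then $S_x$ and $B$ sit with only the trivial tangle $S_y$ and the outer arcs between them. The two outer arcs join the left ends of $S_x$ to the right ends of $S_B$, so $x$ and the crossing in $B$ are joined by two parallel edges. This gives a bigon face between them, excluded by hypothesis. I have to make sure these two edges bound a face, i.e.\ that nothing of the diagram lies between the outer arcs. That holds because the outer arcs run around the pattern and the region between them, outside the four balls, meets the link only in those arcs.

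\textbf{Case 2: the outside of $\gamma_B$ has at most one crossing.} The outside contains $S_x$, which already has a crossing, so $S_A$ contains no crossing. This is the desired conclusion.

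If Case 1 leaves $B$ with one crossing and the bigon argument has a gap, I would apply the same reasoning to $\gamma_A=\partial S_A$. If $A$ has one crossing, then $A$ and $x$ are adjacent through the two arcs joining them directly (no $S_y$ between them on that side), again forming a bigon.

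The main obstacle is the face-identification step in the bigon argument: showing that the two arcs joining $x$ to a single-crossing tangle really bound a bigon face of $D$, rather than enclosing other parts of the diagram. I would handle this by observing that the complement of the four balls in the pattern of Fig.~1.1(i) meets the link only in the connecting arcs. Hence the region bounded by two such arcs and the two crossings contains no further crossings or strands and is a face with two vertices.
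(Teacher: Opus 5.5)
Your proof is correct and is essentially the paper's. The paper applies Lemma~\ref{lem:B1} to the circle around $S_x\cup S_A$ rather than to $\partial S_B$, but since $S_y$ carries no crossing the two circles split the crossings the same way ($\{x\}\cup A$ versus $B$), and your bigon through the two outer arcs is exactly the paper's ``outer face'' argument. The aside about $\gamma_{AB}$ is incoherent as stated (a disc containing both $S_A$ and $S_B$ cannot have boundary separating them), and neither it nor the $\gamma_A$ fallback is needed, so both can be dropped.
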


\begin{proof}
The active crossing $x$ shares two edges with $S_A$ (its right ends) and, via the two outer arcs, two
edges with $S_B$ (its left ends). If $S_A$ contained exactly one crossing, it and $x$ would bound a
bigon between them; if $S_B$ did, the bigon would be the outer face, bounded by the two outer arcs. So
each of $S_A,S_B$ has $0$ or $\ge2$ crossings. The circle around $S_x\cup S_A$ meets $D$ in four
points; by Lemma~\ref{lem:B1} one side carries at most one crossing. The side $S_x\cup S_A$ carries
$x$, and $\ge3$ crossings if $S_A$ is non-empty. So $S_A$ is empty, or $S_B$ has $\le1$ and hence $0$
crossings. (An empty $S_A$ or $S_B$ is the $0$-tangle, two parallel arcs; the $\infty$-tangle would
make $x$ nugatory, so $D$ would not be reduced.)
\end{proof}

\begin{lemma}[degenerate flypes lie in $\cF$]\label{lem:F}
In the situation of Lemma~\ref{lem:A3}, with $D_1$ a diagram of a prime knot, every standard flype $f'\colon\lam(D_1)\to\lam(D_2)$ is
pairwise isotopic to a map in $\cF$.
\end{lemma}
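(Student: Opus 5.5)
By Lemma~\ref{lem:A3}, every standard flype $f'$ on $D_1$ has an empty tangle in $S_A$ or in $S_B$. I would treat the two cases separately.

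\emph{Case $S_A$ empty.} Here I would quote~\cite[p.~116]{MT93} directly: such a standard flype is a flat homeomorphism. By definition it is pairwise isotopic to a map $h$ with $h|_N=h_0\times\mathrm{id}$ and $h_0$ orientation-preserving. By the transfer described under the classes $\cF$ and $\cFpm$, this map has product form on our $N$ with $\varepsilon=+1$ and $\operatorname{sign}(h_0)=+1$, so it lies in $\cF$.

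\emph{Case $S_B$ empty.} Then $S_B$ is the $0$-tangle, and the pattern of Fig.~1.1(i) is $S_x$, $S_A$, $S_y$ together with two parallel arcs. Let $R$ be the rigid rotation through $\pi$ about the same axis in the projection sphere that $f'$ uses on $S_A$. I would compare $f'$ with $R$.
\begin{enumerate}[label=(\roman*)]
\item On $S_A$, the two maps agree.
\item On $S_x$ and $S_y$, $f'$ is a half-twist while $R$ rotates each ball through $\pi$. Since $S_y$ holds two parallel segments and $S_x$ a single crossing, $R\circ{}$flat can be matched to $f'$ there after a flat adjustment supported near these balls. The point is that rotating a single crossing ball by $\pi$ about that axis reproduces the half-twist image up to an isotopy preserving the boundary pattern.
\item On the complementary ball $B$ (the region around $S_B$ and the outer arcs), both maps preserve $B$ and the trivial tangle in it. After steps (i) and (ii) the composite $R^{-1}\circ f'$ restricted to $B$ can be arranged to fix $\partial B$ pointwise. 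Lemma~\ref{lem:T} then isotopes it rel $\partial B$ to the identity, through maps of pairs.
\end{enumerate}
It follows that $f'$ is pairwise isotopic to $R\circ g$ with $g$ flat. As listed among the items under $\cF$, this is an element of $\cF$ with $\varepsilon=-1$, since $R$ reverses both the $z$-direction and the orientation of $S^2$. This is the diagrammatic statement ``a rotation through $\pi$ of the complete diagram'' from~\cite[p.~113]{MT93}, upgraded to pairs.

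\emph{Main obstacle.} I expect the hardest step to be making (iii) rigorous: arranging that $R^{-1}\circ f'$ fixes $\partial B$ pointwise, and not merely setwise, while keeping it a map of pairs. On $\partial B$ the two maps agree on the four marked points and differ by a homeomorphism of a four-punctured sphere. One must show that this homeomorphism is isotopic to the identity rel the points, possibly after composing with a twist absorbed into the half-twists at $S_x$ or $S_y$. If a nontrivial twist remains, I would absorb it into the flat factor, using that Dehn twists along circles about pairs of parallel strands extend as flat maps. The primeness of $D_1$'s knot enters here: it rules out the outer arcs being knotted across $B$, so that the tangle in $B$ is genuinely trivial and Lemma~\ref{lem:T} applies.
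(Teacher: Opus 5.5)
Your $S_A$-empty case is fine. Quoting \cite[p.~116]{MT93} is legitimate, and the paper itself remarks that this case is stated there (it also reproves it with Lemma~\ref{lem:T}).

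The $S_B$-empty case has a genuine gap at step (ii). A rotation through $\pi$ about the row axis is a symmetry of the crossing ball $S_x$: it carries the crossing to the same crossing. The half-twists of $f'$ do something different. They turn the crossing in $S_x$ into two parallel segments, and the parallel segments in $S_y$ into a crossing. In Conway notation, $R(D_1)=N(1+A^h)$ still has its crossing at $S_x$, while $D_2=N(A^h+1)$ has it at $S_y$. In each ball these are different rational tangles ($\pm1$ versus $0$). So no flat adjustment supported near $S_x$ and $S_y$ can match them. Moreover, $R^{-1}\circ f'$ does not map $\lam(D_1)$ to itself, so Lemma~\ref{lem:T} cannot be applied to it.

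The missing idea is exactly where the emptiness of $S_B$ is used. You need a flat map $h'\colon\lam(RD_1)\to\lam(D_2)$: a planar isotopy that slides the crossing from $S_x$ along the outer arcs, through the empty $S_B$, to $S_y$. It is supported in $U$, the closure of the complement of the $S_A$-ball, and is the identity on the $S_A$-ball. You also need the crossing arriving at $S_y$ to have the crossing type of $D_2$ there. The paper gets this from the alternation of $D_2$, deduced from \cite[Theorems~2 and~3]{Mur87} and the primeness of the knot. That is where primeness enters. It is not needed to rule out knotted outer arcs, which lie in $S^2$ away from the crossing balls and cannot be knotted.

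With $h'$ in hand, your ``main obstacle'' disappears, provided you apply Lemma~\ref{lem:T} to $U$ and not to a ball around $S_B$. Around $S_B$ the map $R^{-1}\circ f'$ is a rotation through $\pi$, since $f'$ fixes $S_B$ pointwise and $R$ does not. So it is not the identity on that boundary. Also, if the region around $S_B$ and the outer arcs means the complement of $S_x\cup S_A\cup S_y$, it is not a ball.

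On $U$, by contrast, $(h'R)^{-1}\circ f'$ preserves the $1$-crossing, hence rational, tangle $U\cap\lam(D_1)$. It is the identity on the $S_A$-ball, hence on $\partial U$. So no mapping class of the four-punctured sphere needs analysing, no twists need absorbing, and $f'\simeq h'\circ R\in\cF$.

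Finally, you define $R$ by ``the axis $f'$ uses on $S_A$'' without identifying that axis. The paper shows it must be the row axis $C$, because $f'$ maps $S_x$ into itself and the end points on each ball are equidistant and symmetric about $C$. This is what gives $R(D_1)$ and $D_2$ the same connections outside $S_A$, so that $h'$ exists.
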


\begin{proof}
First the target $D_2$. In both cases below the shadow of $D_2$ is isomorphic to that of $D_1$ (a
planar isotopy moving $x$ across the empty $S_A$, or the reflection $R$ with $x$ moved across the empty
$S_B$). So $D_2$ is a diagram with $c$ crossings of the same knot type as $D_1$. By
Murasugi~\cite[Theorems~2 and~3]{Mur87}, a connected proper alternating projection $\tilde L$ of an
alternating link $L$ has $\operatorname{span}V_L(t)=c(\tilde L)$, and for a prime link every
non-alternating projection has $\operatorname{span}V_L(t)<c(\tilde L)$. So
$\operatorname{span}V(K)=c$ from $D_1$, and since $K$ is prime, $D_2$ is alternating. (Murasugi notes
after Theorem~3 that primeness is needed there.) The crossing-type arguments below use that $D_2$
alternates.

\emph{$S_A$ empty.} Let $T$ be the closure of the complement of the $S_B$-ball, a ball containing
$S_x,S_A,S_y$. Then $T\cap\lam(D_1)$ is a $1$-crossing tangle, hence rational. Inside the disc
$T\cap S^2$, $D_2$ differs from $D_1$ only by the position of $x$ relative to the two parallel arcs of
$S_A$. Both are $1$-crossing tangle diagrams in that disc with the same four boundary points, the same
connections (the strands join opposite corners) and the same crossing type, which is forced by
alternation with the crossings of $S_B$ next to them. So there is a flat
$h\colon\lam(D_1)\to\lam(D_2)$ that is the identity on the $S_B$-ball (a planar isotopy supported in
$T\cap S^2$, times $\mathrm{id}$). Then $h^{-1}\circ f'$ preserves $(T,T\cap\lam(D_1))$ and is the
identity on $\partial T$, since $f'$ fixes $S_B$ pointwise. By Lemma~\ref{lem:T} it is isotopic to
$\mathrm{id}$ rel $\partial T$. Hence $f'\simeq h$, which is flat. (This case is also stated
in~\cite[p.~116]{MT93}.)

\emph{$S_B$ empty.} Here the text of~\cite{MT93} uses two pictures at once. The diagram lies on the unit
sphere~\cite[p.~115]{MT93}, while the circles of Fig.~1.1 are ``euclidean 2-spheres meeting the (extended)
projection plane in their respective equators'', and (i) of the definition is ``a rigid rotation
through $\pi$ about an axis in the projection plane''~\cite[p.~115]{MT93}. A rotation through $\pi$ about a line
preserves the unit sphere only if the line passes through $0$, and then it does not exchange the two
sides of the sphere, as a rotation about an axis in the projection plane does; so the definition has to
be interpreted. We read it in the round model, where both clauses hold
literally. The balls are metric balls centred on $S^2$, with centres on one great circle $C\subset S^2$ (the
row of Fig.~1.1; a general pattern is reached by flat maps~\cite[p.~116]{MT93}); they meet $S^2$ in their
equators, since the reflection $(y,z)\mapsto(y,-z)$ in $S^2$ preserves them. The four points in which
$\lam(D_1)$, and likewise $\lam(D_2)$, meets the boundary of each ball lie on its equator, spaced
equidistantly and symmetrically about $C$, as in Fig.~1.1; on p.~143 of~\cite{MT93} the four points on the Conway circle of a normal tangle are
likewise spaced equidistantly, there for convenience, while the symmetry about $C$ is taken from Fig.~1.1. A rigid rotation through $\pi$
of such a ball about an axis in the projection sphere is the restriction of an isometry $R=(g,-1)$ of
$S^3$, with $g$ the reflection of $\R^3$ in the plane through $0$ containing that axis; $R$ is the
rotation of $S^3$ through $\pi$ about the great circle in which this plane meets $S^2$. For rotations about $C$ this reading is
their Fig.~1.1 literally. Stereographic projection from a point of $C$ outside the four balls and the link
is conformal; it maps $S^2$ to a plane $P$ and the reflection in $S^2$ to the reflection in $P$, hence the
four balls to Euclidean balls centred on $P$ and meeting it in their equators; and it maps a rotation $R$
about $C$, an orientation-preserving involution fixing $C$ and the centre of projection, to the rigid
rotation through $\pi$ about the line that is the image of $C$. The projection $\mu$ from $(0,1)$, on the
other hand, carries the round model to their unit sphere and product neighbourhood $N$ (\emph{Model
of $S^3$} above); the two projections differ by a M\"obius transformation of $\R^3\cup\{\infty\}$. Any two
standard flypes for one pattern are pairwise isotopic: the composite of one with the inverse of the other is
pairwise isotopic to a map that fixes every crossing ball pointwise and maps every complementary region into
itself, and therefore to the identity~\cite[p.~115]{MT93}.
We take the round reading as the meaning of their definition; so it suffices to treat one standard flype in it, and
we apply Theorem~\ref{thm:MT} with standard flypes in this reading. By (iii) of the
definition $f'$ maps the $S_x$-ball into itself, and $D_2$ has the pattern of their Fig.~1.1(ii); the two arcs
of $\lam(D_1)$ joining $S_x$ to $S_A$ lie outside the four balls, so their images join $S_x$ to the two left end
points of $S_A$. Hence the rotation of the $S_A$-ball maps its left pair of end points to itself; the four end points being equidistant and
symmetric about $C$, the only axis in the projection plane through its centre doing so is $C$, the row axis. So the axis is the row
axis through the centre of $S_A$, and $f'$ and $R$ agree on the $S_A$-ball. Let $U$ be the closure of the complement of the $S_A$-ball;
$U\cap\lam(D_1)$ is again a $1$-crossing tangle. In Conway notation $D_1=N(1+A)$ and
$D_2=N(A^h+1)$, while $R(D_1)=N(1^h+A^h)=N(1+A^h)$. Inside the disc $U\cap S^2$, $R(D_1)$ and $D_2$
are $1$-crossing tangle diagrams with the same boundary points and connections (the crossing joined
directly to one side of $A^h$ and around the outer arcs to the other), and their crossing types agree,
being forced by alternation with the non-empty $A^h$. So there is a flat
$h'\colon\lam(RD_1)\to\lam(D_2)$ that is the identity on the $S_A$-ball. Then $(h'R)^{-1}\circ f'$ is
the identity on the $S_A$-ball and preserves $(U,U\cap\lam(D_1))$; by Lemma~\ref{lem:T} it is
isotopic to $\mathrm{id}$ rel $\partial U$. Hence $f'\simeq h'\circ R\in\cF$.
\end{proof}

\begin{lemma}[rigidity]\label{lem:S}
If $\varphi,\varphi'\in\cFpm$ both preserve $\lam(\Pi)$ and induce the same automorphism of $M$ with
the same $\varepsilon$, they are pairwise isotopic.
\end{lemma}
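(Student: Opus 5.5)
Set $\psi=\varphi'^{-1}\circ\varphi$. Since $\cFpm$ is closed under composition, $\psi\in\cFpm$. It preserves $\lam(\Pi)$, induces the identity on $M$, and has $\varepsilon=+1$. The last point holds because the two $\varepsilon$'s agree, and in $\cFpm$ the sign $\varepsilon$ of a composite is the product of the signs. So on $N$ we have $\psi=h_0\times\mathrm{id}$, where $h_0$ is a homeomorphism of $S^2$ fixing every crossing and mapping every edge and every face of $M$ to itself. The identity on $M$ preserves the cyclic order of half-edges at a crossing, so $h_0$ preserves orientation; otherwise it would reverse the rotation system at each vertex. The plan is to isotope $\psi$ to the identity through homeomorphisms of pairs, in three stages: the projection sphere, the neighbourhood $N$, and the two complementary balls.

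\emph{Stage 1: on $S^2$.} Isotope $h_0$, rel the crossings, to the identity on the shadow $M\subset S^2$. Each edge is an arc mapped to itself with fixed endpoints, and such an arc homeomorphism is isotopic to the identity rel endpoints. Since $h_0$ fixes the crossing discs up to isotopy, I would first straighten $h_0$ on each small crossing disc to be the identity; this uses the Alexander trick on a disc that maps the four rays to themselves. Next, each face is an open disc mapped to itself with its boundary now fixed pointwise, so the Alexander trick on the closed face disc isotopes $h_0$ to the identity rel boundary. Taking products with $\mathrm{id}_{[-\zeta,\zeta]}$, these isotopies become isotopies of $\psi$ on $N$ through maps of the form $h_t\times\mathrm{id}$. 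They preserve $\lam(\Pi)$ because, in the star model, the realised knot inside $N$ is determined by the shadow together with the heights $\pm\beta(d)$ near crossings. To keep this true I would arrange the isotopy to be the identity on the crossing discs and to fix each edge setwise, so that the lifted strands, which are the edges at height $0$ outside the crossing discs, are carried to themselves. Extend the isotopy outside $N$ by tapering it radially in a collar of $\partial N$, where the knot does not meet.

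\emph{Stage 2: the complementary balls.} After Stage~1, $\psi$ is the identity on $N$. The closure of $S^3\setminus N$ consists of two $3$-balls, the components with $z\ge\zeta$ and $z\le-\zeta$, and neither meets $\lam(\Pi)$. $\psi$ maps each to itself and is the identity on its boundary sphere. By the Alexander trick, in the PL category as in the paper's conventions, it is isotopic rel boundary to the identity. Gluing these isotopies with the constant one on $N$ gives a pairwise isotopy from $\psi$ to $\mathrm{id}$, and hence $\varphi\simeq\varphi'$.

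\emph{Main obstacle.} The delicate step is Stage~1 near the crossings. We need the isotopy of $h_0$ to stay in the class of maps for which $h_t\times\mathrm{id}$ preserves $\lam(\Pi)$, that is, to preserve the over- and under-strand arcs inside each crossing disc and not just the shadow. I would handle this by first showing that $h_0$ restricted to a crossing disc maps each of the four rays to itself; this holds because it fixes the crossing and each incident edge. An isotopy rel the rays, which is radial in $d$, then makes $h_0$ the identity on the disc while preserving $d$ along the rays, so the lift at height $\pm\beta(d)$ is respected. A secondary point is that "fixing every face" needs $h_0$ to fix faces and not merely the vertex set. This follows from $h_0$ inducing the identity on $M$ together with orientation preservation, since a face is determined by a flag.
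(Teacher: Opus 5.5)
Your plan has the same architecture as the paper's proof. You pass to $\psi=\varphi'^{-1}\circ\varphi=h_0\times\mathrm{id}$ on $N$, make $h_0$ the identity on the shadow, apply the Alexander trick face by face, and then apply it to the two caps. Your collar tapering works in place of the paper's coning. The gap is at the step you yourself call the main obstacle.

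\begin{itemize}
\item Your reason that $h_0$ maps each ray to itself does not work. You argue that $h_0$ fixes the crossing and the incident edge. But in the star model the ray at $c$ is only the initial subarc of length $r$ of that edge. A homeomorphism of the edge fixing its endpoints can move the endpoint of this subarc, so combinatorics gives neither setwise nor pointwise preservation of the ray.
\item You also cannot \emph{make} $h_0$ the identity on the crossing disc $D_c$ by an Alexander trick. Nothing you have shown gives $h_0(D_c)=D_c$ or $h_0=\mathrm{id}$ on $\partial D_c$; the phrase ``fixes the crossing discs up to isotopy'' is unsupported.
\item An isotopy rel the rays leaves $h_0$ unchanged on the rays. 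It can end at the identity on $D_c$ only if $h_0$ was already the identity there.
\end{itemize}

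The missing idea is the paper's step~(1): $h_0=\mathrm{id}$ on every ray is forced by the hypothesis, not arranged by an isotopy.

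\begin{itemize}
\item On $N$, $\psi$ preserves heights and preserves $\lam(\Pi)$.
\item Take a point over a ray at distance $d<r$ from $c$. It is lifted to height $\pm\beta(d)\ne0$, so its image is a point of $\lam(\Pi)$ at the same nonzero height.
\item Since $\beta$ is strictly monotone, the image lies over a ray at the same distance $d$ from its crossing, and on the same over/under side.
\item $h_0$ maps the edge $e$ containing the ray to itself, and by alternation the other end of $e$ carries the opposite sign. Hence $h_0$ is the identity pointwise on every ray.
\end{itemize}

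With this in hand, drop the crossing-disc step entirely. Next, straighten $h_0$ on the parts of the edges outside the crossing discs, rel their endpoints; this keeps $\lam(\Pi)$ invariant because $\lam(\Pi)$ lies at height $0$ there. Then $h_0$ is the identity on the whole shadow. The Alexander trick on each closed face, which contains the sectors of the crossing discs, completes your Stage~1. Your Stage~2 then goes through as written.
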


\begin{proof}
$\psi:=\varphi'^{-1}\circ\varphi\in\cFpm$ is $k_0\times\mathrm{id}$ on $N$, with $k_0$ fixing every
flag of $M$. (1) $\psi$ preserves $\lam(\Pi)$. Over each crossing disc the height $|z|=\beta(d)$
determines the distance $d$ to $c$, $\beta$ being strictly monotone; so $k_0$ maps each ray from $c$ to
a ray at the same distances, by flag preservation to itself. Hence $k_0=\mathrm{id}$ on the rays.
(2) Outside the crossing discs $k_0$ maps each edge of $\Pi$ to itself with fixed endpoints; isotope it
to the identity along the edges by $k_t\times\mathrm{id}$, keeping $k_t=\mathrm{id}$ on the rays inside
the crossing discs. This preserves $\lam(\Pi)$, because there $\lam(\Pi)=\Pi$ at $z=0$. (3) Now
$k_0=\mathrm{id}$ on $\Pi$ and each face goes to itself; use the Alexander trick face by face rel
boundary, $k_t\times\mathrm{id}$ fixing $\lam(\Pi)$ pointwise. (4) Then $\psi\simeq\mathrm{id}$ on $N$
by an isotopy that is a product on $N$; extend it over the two complementary cap balls by coning, since
they contain no part of $K$. Now $\psi$ is the identity on $N$ and on the boundary of each cap, and the
Alexander trick in each cap finishes.
\end{proof}

\begin{proposition}\label{prop:surj-plus}
Every orientation-preserving element of $\Sym(S^3,K)$ is in the image of $\Phi$.
\end{proposition}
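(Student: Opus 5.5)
Let $f\colon(S^3,K)\to(S^3,K)$ be an orientation-preserving homeomorphism of pairs, taken PL as in the \emph{Category} convention. We identify $K$ with $\lam(\Pi)$ and orient $\Pi$; if $f$ reverses the orientation of $K$ we compose the target orientation with the reversal, which is harmless since Theorem~\ref{thm:MT} only needs both diagrams oriented and $f$ orientation-preserving on $S^3$. By Section~\ref{sec:diagram}, Lemma~\ref{lem:B2} and Remark~\ref{rem:diagram-prime}, $\Pi$ is reduced, prime and alternating. So Theorem~\ref{thm:MT} applied with $D_1=D_2=\Pi$ writes $f=f_n\circ\dots\circ f_1$, where each $f_j=g_1^{(j)}\circ f'_j\circ g_2^{(j)}$ has $f'_j$ a standard flype and $g_i^{(j)}$ flat. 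The intermediate diagrams $D^{(j)}$ are realised diagrams obtained from $\Pi$ by flat maps and standard flypes.

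The first step is to show that every intermediate diagram is again a copy of $\Pi$ up to flat maps. Then Lemma~\ref{lem:A3} applies to each standard flype: the shadow of each $D^{(j)}$ is isomorphic to that of $\Pi$, hence polyhedral (Lemma~\ref{lem:B1}) and bigon-free since $q\ge3$ and the faces are triangles, quadrilaterals and $q$-gons. We argue inductively on $j$ using the shadow-isomorphism statement proved inside Lemma~\ref{lem:F}. Each $f'_j$ is then degenerate, and Lemma~\ref{lem:F} makes it pairwise isotopic to a map in $\cF$. Flat maps are in $\cF$ up to isotopy by the discussion of $\cF$ and flatness, and $\cF$ is closed under composition. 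Hence $f$ is pairwise isotopic to some $\varphi\in\cF$ with $\varphi(\lam(\Pi))=\lam(\Pi)$. To arrange this literally, rather than only between isotopic copies, we absorb the final flat identification of $D^{(n)}$ with $\Pi$ into the last factor.

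Next, $\varphi$ has product form $h_0\times(z\mapsto\varepsilon z)$ on $N$ and preserves $\lam(\Pi)$. So $h_0$ maps the shadow $\Pi$ to itself up to isotopy in $S^2$, and it induces an automorphism $\alpha\in\Aut(M)$ with sign $\operatorname{sign}(h_0)$. Since $\varepsilon$ is fixed, $\varphi$ sends over-strands to over-strands when $\varepsilon=+1$ and to under-strands when $\varepsilon=-1$. Thus $(\alpha,\varepsilon)\in\Sym(\Pi)$, and by Lemma~\ref{lem:lower} it is realised by an orthogonal map $(g,\varepsilon)\in G$. This $(g,\varepsilon)$ is orientation-preserving because $\operatorname{sign}(h_0)\varepsilon=+1$. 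Both $\varphi$ and $(g,\varepsilon)$ lie in $\cFpm$, preserve $\lam(\Pi)$ and induce the same $(\alpha,\varepsilon)$, so Lemma~\ref{lem:S} gives $\varphi\simeq(g,\varepsilon)$. Hence $[f]=\Phi(g,\varepsilon)$.

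The main obstacle is making $h_0$ preserve $\Pi$ and $\lam(\Pi)$ on the nose, which Lemma~\ref{lem:S} needs. A priori $\varphi$ only carries $\lam(\Pi)$ to a flat-isotopic copy. We would first isotope $h_0$ within $S^2$ so that it maps crossings to crossings and edges to edges; this works because the image diagram has an isomorphic shadow and planar isotopies of $S^2$ extend as product isotopies on $N$. We would then match the star models over the crossing discs radially, isotoping along the crossing balls as in the \emph{Realised diagram} paragraph. Only after this normalisation do we apply Lemma~\ref{lem:S}.
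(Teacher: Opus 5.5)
Your proposal is correct and is essentially the paper's proof: Theorem~\ref{thm:MT}; every standard flype degenerate by Lemma~\ref{lem:A3}, applied inductively (the intermediate diagrams are alternating by the Murasugi step inside Lemma~\ref{lem:F}, which you use only implicitly); each flype pairwise isotopic to a map in $\cF$ by Lemma~\ref{lem:F}; then the count of Section~\ref{sec:count} and Lemma~\ref{lem:S}. The normalisation in your last paragraph is not needed: each factor is pairwise isotopic to a map in $\cF$ through maps $\lam(D_{i-1})\to\lam(D_i)$ with $D_0=D_n=\Pi$, so the composite $\varphi$ already satisfies $\varphi(\lam(\Pi))=\lam(\Pi)$ exactly, as the paper states.
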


\begin{proof}
Let $\Phi_0$ be an orientation-preserving symmetry of $(S^3,\lam(\Pi))$. By Theorem~\ref{thm:MT},
$\Phi_0\simeq f_n\circ\dots\circ f_1$ with flypes $f_i\colon\lam(D_{i-1})\to\lam(D_i)$, $D_0=D_n=\Pi$.
Write $f_i=g_1\circ f'\circ g_2$.

\emph{Claim:} every $D_i$, and every diagram met in between, has underlying map isomorphic to $M$ or to
its mirror and is reduced, alternating, polyhedral and bigon-free. It holds for $D_0=\Pi$ by
Lemmas~\ref{lem:B1} and~\ref{lem:B2} and because the faces of $\Pi$ have degrees $3$, $4$ and $q\ge3$
(Section~\ref{sec:diagram}); flat maps keep it; and by Lemma~\ref{lem:A3} each $f'$ is
degenerate, its target has shadow isomorphic to $D$ ($S_A$ empty) or to $R(D)$, a mirror map ($S_B$
empty), and it is alternating by the Murasugi argument in Lemma~\ref{lem:F}. So Lemmas~\ref{lem:A3}
and~\ref{lem:F} apply at every step, each $f_i$ is isotopic to a map in $\cF$, and so is the composite:
$\Phi_0\simeq\varphi\in\cF$ with $\varphi(\lam(\Pi))=\lam(\Pi)$.

Near $S^2$, $\varphi=h_0\times\varepsilon$. Since $\varphi$ maps $\lam(\Pi)$ to itself, $h_0$ maps $\Pi$
to $\Pi$, inducing $\alpha\in\Aut(M)$, and $\varepsilon$ obeys the over/under rule for $\alpha$. By
Section~\ref{sec:count}, $|\Aut(M)|\le4q$, $\varepsilon$ is determined by $\alpha$, and $G$ realises
$4q$ distinct automorphisms; so some $g\in G$ induces $\alpha$ with the same $\varepsilon$. By
Lemma~\ref{lem:S} with $\varphi'=g$, $\Phi_0\simeq g$.

If $\Phi_0$ reverses the orientation of $K$, apply Theorem~\ref{thm:MT} with $D_2=\Pi$ reversed; the
map $M$ is the same. The maps of~\cite{MT93} respect the link orientation (``preserving orientations of
graphs and ambient spaces'', \cite[p.~117]{MT93}); their remark on reversibility
\cite[Corollary~1.1, pp.~116--117]{MT93} is this use.
\end{proof}

\subsection{Surjectivity, orientation-reversing symmetries}

\begin{proposition}\label{prop:surj-minus}
Every orientation-reversing element of $\Sym(S^3,K)$ is in the image of $\Phi$.
\end{proposition}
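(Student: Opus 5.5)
The plan is to reduce to the orientation-preserving case, Proposition~\ref{prop:surj-plus}, by composing with a known orientation-reversing symmetry where one exists, and to argue directly when none exists. Let $\Psi$ be an orientation-reversing homeomorphism of pairs $(S^3,\lam(\Pi))\to(S^3,\lam(\Pi))$.

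\emph{Case $p$ odd.} By Remark~\ref{rem:chirality}, the element $\sigma$ with $\varepsilon=-1$ is an orthogonal map in $G$ that reverses the orientation of $S^3$ and preserves $\lam(\Pi)$. Then $\Psi\circ\sigma$ is an orientation-preserving symmetry. By Proposition~\ref{prop:surj-plus} it is pairwise isotopic to some $g\in G$, so $\Psi\simeq g\sigma^{-1}\in G$. This case is formal.

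\emph{Case $p$ even.} Here $G$ has no orientation-reversing element, so we must show that $\Psi$ cannot exist; this is the main step. Let $r$ be the reflection $(y,z)\mapsto(y,-z)$ in the projection sphere. It carries $\lam(\Pi)$ to $\lam(\Pi^*)$, where $\Pi^*$ is the diagram with all crossings switched, which is again reduced, prime and alternating with the same map $M$. Then $r\circ\Psi\colon(S^3,\lam(\Pi))\to(S^3,\lam(\Pi^*))$ is orientation-preserving. Apply Theorem~\ref{thm:MT} with $D_1=\Pi$ and $D_2=\Pi^*$, orienting $\Pi^*$ compatibly with the image orientation of $K$ (reversing it if necessary, as at the end of Proposition~\ref{prop:surj-plus}).

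The Claim from the proof of Proposition~\ref{prop:surj-plus} goes through unchanged: every intermediate diagram is reduced, alternating, polyhedral and bigon-free with map isomorphic to $M$ or its mirror. Lemmas~\ref{lem:A3} and~\ref{lem:F} then give $r\circ\Psi\simeq\varphi\in\cF$ with $\varphi(\lam(\Pi))=\lam(\Pi^*)$. On $N$, $\varphi=h_0\times\varepsilon$ with $\operatorname{sign}(h_0)\varepsilon=+1$. Here $h_0$ induces $\alpha\in\Aut(M)$, and the crossing rule says that $\alpha$ sends over-strands of $\Pi$ to over-strands of $\Pi^*$ (that is, to under-strands of $\Pi$) exactly when $\varepsilon=+1$. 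Equivalently, $(h_0,-\varepsilon)$ is a symmetry of the diagram $\Pi$ in the sense of Section~\ref{sec:count}, with $\operatorname{sign}(h_0)\cdot(-\varepsilon)=-1$. By Lemmas~\ref{lem:upper} and~\ref{lem:lower}, every element of $\Aut(M)$ comes from $G$ with its determined $\varepsilon$. But for $p$ even no element of the table has sign product $-1$ (Remark~\ref{rem:chirality}), which is a contradiction. Hence $\Psi$ does not exist, and the statement holds vacuously.

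The points to check carefully are the bookkeeping of $\varepsilon$ under the switch to $\Pi^*$ and the orientation conventions for $K$ in Theorem~\ref{thm:MT}. If $K$'s orientation is reversed, we use $\Pi^*$ reversed; the map $M$ is the same, so the argument is unaffected. This also yields the chirality in Corollary~\ref{cor:chiral} directly from Menasco--Thistlethwaite.
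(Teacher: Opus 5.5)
Your proof is correct, but it is organised differently from the paper's.

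\textbf{The paper's route.} The paper gives one argument for all $p$. It composes $\Phi_0$ with the reflection $r$ and runs Theorem~\ref{thm:MT} and the flype induction from $\Pi$ to $\Pi^*$, which gives $r\circ\Phi_0\simeq\varphi\in\cF$. It then notes that $r^{-1}\circ\varphi\in\cFpm$ preserves $\lam(\Pi)$ and has the form $h_0\times(-\varepsilon)$. It takes $g\in G$ inducing the same automorphism with the same sign, and concludes $\Phi_0\simeq g$ by Lemma~\ref{lem:S} in its $\cFpm$ form.

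\textbf{Your even case.} This is the same argument stopped one step early. The required $g$ would have $\operatorname{sign}(g)\cdot\varepsilon=-1$, and no element of $G$ has that. So no $\Psi$ exists, and Lemma~\ref{lem:S} is never needed. The paper's uniform proof contains this contradiction implicitly: for even $p$, its matching step simply cannot occur.

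\textbf{Your odd case.} This is genuinely different and more formal. Composing with $\sigma$ (a rotation of $S^2$ with $\varepsilon=-1$) reduces the statement to Proposition~\ref{prop:surj-plus} by a purely group-theoretic step. There is no second application of the flyping theorem to the mirror diagram, and no use of the $\varepsilon=-1$ half of $\cFpm$ in Lemma~\ref{lem:S}.

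\textbf{Comparison.} Your route is economical in each case, and for even $p$ it derives chirality directly from the flyping theorem plus the count. The paper's route is uniform: it does not need an explicit orientation-reversing element of $G$ in hand, and it would apply verbatim to other diagrams satisfying the hypotheses. In fact your even-case argument, with Lemma~\ref{lem:S} applied in place of the contradiction, already covers odd $p$, so the case split is a choice rather than a necessity.
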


\begin{proof}
Let $\Phi_0$ reverse the orientation of $S^3$, and let $r=(\mathrm{id},-1)$, the reflection in $S^2$,
which lies in $\cFpm$ with $\det\cdot\varepsilon=-1$. Then $r\circ\Phi_0\colon\lam(\Pi)\to\lam(\Pi^*)$
preserves orientation, where $\Pi^*$ is $\Pi$ with all crossings switched (same map $M$).
Theorem~\ref{thm:MT} with $D_2=\Pi^*$ (reversed if $r\circ\Phi_0$ reverses the orientation of $K$, as in Proposition~\ref{prop:surj-plus}), the induction in the proof of Proposition~\ref{prop:surj-plus}
(with $D_n=\Pi^*$) and Lemma~\ref{lem:F} give $r\circ\Phi_0\simeq\varphi\in\cF$. So
$\Phi_0\simeq r^{-1}\circ\varphi$, which near $S^2$ is $h_0\times(-\varepsilon)$ with
$\operatorname{sign}(h_0)\cdot(-\varepsilon)=-1$ and which maps $\lam(\Pi)$ to itself. As before some $g\in G$
induces the same automorphism with the same sign $-\varepsilon$, and Lemma~\ref{lem:S} (for $\cFpm$)
gives $\Phi_0\simeq g$.
\end{proof}

\subsection{Pair and exterior}\label{subsec:pair}
By uniqueness of regular neighbourhoods and isotopy extension, a homeomorphism of the pair can be isotoped
to preserve $E$, and a pairwise isotopy between two such homeomorphisms can be replaced by one preserving
$E$ throughout. So restriction gives well-defined maps $\Sym(\Pi)\to\mathrm{MCG}(S^3,K)\to\mathrm{MCG}(E)$. The composite is injective, since
the proof of Proposition~\ref{prop:inj} works on $E$. It is surjective: a homeomorphism $h$ of $E$ sends
the meridian to $\pm$ the meridian, since otherwise Dehn filling would give $S^3$ by a non-trivial
surgery on the non-trivial knot $K$, which Gordon and Luecke~\cite{GL89} exclude; so $h$ extends to a
homeomorphism of the pair, which is isotopic to some $g\in G$ by Propositions~\ref{prop:surj-plus}
and~\ref{prop:surj-minus}, and $h$ is isotopic to $g|_E$ by the same remark. Hence both mapping class groups are
isomorphic to $\Sym(\Pi)$.

\section{Proof of Theorem~\ref{thm:main}}
\label{sec:proof}

By Propositions~\ref{prop:inj}, \ref{prop:surj-plus} and~\ref{prop:surj-minus},
$\Phi\colon\Sym(\Pi)\to\Sym(S^3,\Th(p,q))$ is an isomorphism, and by Lemmas~\ref{lem:lower}
and~\ref{lem:dihedral}, $\Sym(\Pi)\cong D_{2q}$ of order $4q$. Corollary~\ref{cor:chiral} follows by
Remark~\ref{rem:chirality}. \qed

\begin{remark}
What the proof cites without re-proving: the Main Theorem of~\cite{MT93} with their definitions of flat
map and flype~\cite[pp.~115--116]{MT93}; Borel's theorem, via~\cite{CWY}; \cite[Theorems~2 and~3]{Mur87};
Gordon--Luecke~\cite{GL89}, only for passing from the pair to the exterior;
and standard facts: hyperbolic knot exteriors are aspherical with incompressible boundary and centreless
$\pi_1$, hyperbolic knots are prime, the peripheral subgroup is malnormal, the centre of a
non-trivial amalgam over $P$ lies in $P$, and the independence of the category noted in
Section~\ref{sec:reduction}. For general polyhedral diagrams the argument would need an
isometric realisation of the diagram automorphisms, e.g.\ via Koebe--Andreev--Thurston; for $\Th(p,q)$
the drawing of Section~\ref{subsec:drawing} is explicit. Schrijver's flyping theorem for well-connected
diagrams~\cite{Sch93} is the diagram-level statement matching Lemma~\ref{lem:B1}; it gives no statement
about maps and is not used.
\end{remark}

\section{Comparison with Sakuma--Weeks for $p=3$}
\label{sec:SW}

\emph{What~\cite{SW95} prove.} $K(n)$ is the closure of the $3$-braid
$(\sigma_1\sigma_2^{-1})^n$~\cite[Example~I.2.2]{SW95}, so $K(q)=\Th(3,q)$. For $n\ge4$,
$\Isom(S^3-K(n))\cong D_{2n}$, of order $4n$; for $n=3$, the Borromean rings, the group is
$O\times\Z_2$~\cite[Proposition~I.2.5(2)]{SW95}. $\Th(3,q)$ is a knot if and only if $3\nmid q$, so for
knots their range $n\ge4$ is exactly the range of Theorem~\ref{thm:main} for $p=3$. (The survey treats
$\Th(3,q)$ under the assumption $\gcd(3,q)=1$~\cite[\S5]{DS} and states Theorem~M ``for any $q$'' in that
setting; \cite[Proposition~I.2.5]{SW95} covers $n\ge4$ there, and $q=2$, the figure-eight knot, lies
outside both their range and ours.)

\emph{How they prove it.} Following Thurston~\cite[6.8.11]{Th}, $S^3-K(n)$ is glued from two ideal
polyhedra $P(n)$ whose faces are regular in the Klein model, hyperbolically and euclideanly; this gives
the complete hyperbolic structure~\cite[pp.~399--401]{SW95}. Sakuma and Weeks introduce the construction
through the decomposition of an alternating link complement into two polyhedra read off the
diagram~\cite[p.~397, Proposition~I.2.1]{SW95}, crediting Menasco, Aitchison--Rubinstein and
others~\cite{Me83,AR1}. A centre criterion~\cite[Proposition~I.1.4]{SW95} shows that this is the
canonical (Epstein--Penner~\cite{EP}) decomposition. By Mostow rigidity the mapping class group of the
complement is the combinatorial automorphism group of the canonical decomposition~\cite[p.~393]{SW95};
that this group is the one in the proposition is ``easy to see''~\cite[p.~402]{SW95}.

\emph{How we prove it.} The route here is topological and diagrammatic: Theorem~\ref{thm:MT} reduces
$\Sym(S^3,K)$ to $\Sym(\Pi)$ once Section~\ref{sec:polyhedral} has excluded non-trivial flypes, and the
$4q$ symmetries are counted in Section~\ref{sec:count}. No hyperbolic structure is constructed.

\emph{Where the two meet.} Both arguments end in a finite combinatorial count: in~\cite{SW95} the
automorphism group of the two polyhedra and their gluing, not written out; here the symmetry group of the
$4$-valent map with over/under data, written out in Section~\ref{sec:count}.

\emph{What each gives.} \cite{SW95} give the canonical decomposition itself, hence the hyperbolic
structure explicitly, and the isometry group also for the link $K(3)$. Their first step needs the
explicit regular structure, and by their own Remark~I.1.5 the canonical decompositions of almost all
hyperbolic knots up to $10$ crossings do not satisfy the assumption of their criterion. We do not know whether $\Th(p,q)$ for $p\ge4$ has a decomposition
to which their Proposition~I.1.4 applies; the argument here does not need one. It covers every
$p\ge3$ at once, and it rests instead on the Main Theorem of~\cite{MT93},
on Borel's theorem, on~\cite{Mur87} and, for the exterior, on~\cite{GL89}.

\emph{The relation they point to.} In \S III.1 Sakuma and Weeks conjecture that the arc of every crossing
of a reduced alternating diagram of a hyperbolic alternating link is isotopic to an edge of the canonical
decomposition, and write: ``We hope this approach sheds new light on the Tait conjecture (which was proved
by [MT] using combinatorial methods) from the point of view of hyperbolic
geometry''~\cite[p.~436]{SW95}. For Turk's head knots the argument here runs the other way, from the
theorem of Menasco and Thistlethwaite to the symmetry group that Sakuma and Weeks obtained geometrically
for $p=3$. For $p=3$ the two agree: Section~\ref{sec:count} gives $\rho$, $\sigma$ and the $\mu_a$,
together $D_{2q}$ of order $4q$.

\section{Computational checks}
\label{sec:checks}

The proof does not rest on computation; the following checks test the explicit formulas and the model.
\begin{itemize}
\item The formulas of Section~\ref{sec:count}: $\rho$, $\sigma$ and all $\mu_a$ are map automorphisms
with the stated $\varepsilon$; constant $a$ fails; $\tau$ has order $2q$; $\mu\tau\mu=\tau^{-1}$; the $4q$
elements have distinct vertex images. $217$ of $217$ cases ($3\le p\le15$, $3\le q\le30$, $\gcd=1$).
\item The hand model of the diagram is isomorphic, as a map with over/under data, to spherogram's diagram
of the braid closure, for even $p$ to its mirror image: $60$ of $60$ cases ($p\le10$, $q\le15$).
\item Independently of the hand model, on spherogram's PD code: Tait graph simple and $3$-connected,
$|\Sym(\Pi)|=4q$, dihedral, in $263$ of $263$ cases with $q\ge3$ ($p\le16$, at most $200$ crossings).
Calibration: $8_{18}$ gives $16$, $9_{40}$ gives $12$, $10_{123}$ gives $20$; diagrams with flypes
($6_1$, $7_6$, $8_3$, $8_{14}$) are correctly not $3$-connected.
\item The symmetric drawing of Section~\ref{subsec:drawing} is planar, has four rays at every vertex,
and $\rho$, $\sigma$, all $\mu_a$ are affine maps of it onto itself: $42$ of $42$ cases ($p\le9$,
$q\le12$); for $p\ge4$, with offsets $\tfrac{i-1}{p-1}$ the maps $\mu_a$ are not of the form $\theta\mapsto a-\theta$, that is, not isometries of the drawing. The cyclic order of the
four rays: $64$ of $64$ cases.
\item SnapPy's isometry groups (numerical) have order $4q$ in $95$ of $95$ cases with $q\ge3$ ($p\le11$, at most $90$ crossings); exact certification
only for $\Th(3,4)$ and $\Th(4,3)$.
\end{itemize}

\subsection*{Acknowledgements}
We thank William Menasco for sending us the Annals paper~\cite{MT93} and the paper~\cite{Me83}, and O\u{g}uz \c{S}avk for
the reply to our question and for suggesting this write-up and the comparison with Sakuma and Weeks.

\subsection*{Tool use}
This work was prepared with substantial use of Claude Code (Anthropic), an agentic AI system based on
large language models, working together with the author.
\begin{itemize}
\item \emph{Mathematics.} Conjecture~R was among the open problems found in a literature search carried
out with Claude Code; the author chose it. The proof was worked out by Claude Code under the author's direction.
\item \emph{Writing and typesetting.} The text and the \LaTeX{} source were written by Claude Code, in
English; no translation tools were used. The paper contains no figures.
\item \emph{References.} Literature was searched with Claude Code; the sources were obtained by the author and by Claude Code, and
W.~Menasco sent us~\cite{MT93} and~\cite{Me83}. Quotations and page
references were checked against the original texts where these were available to us; bibliographic data
were checked against Crossref or against the sources themselves.
\item \emph{Code.} The checks of Section~\ref{sec:checks} were written and run by Claude Code in Python,
using SnapPy~3.3.2, spherogram~2.4.1 and networkx~3.6.1, and SageMath~10.9 for the exact certification.
The deposited scripts, at \url{https://doi.org/10.5281/zenodo.23234787}, are English versions of the working
scripts, prepared by Claude Code and run again before deposit with the same results; the deposit lists the
exact software versions.
\item \emph{Checking.} Drafts were read at least seventeen times by separate instances of Claude Code (in full, as the changes between
versions, or as targeted readings of single passages), without access to
the discussion in which the arguments were developed; their findings were checked and incorporated where
they held.
\end{itemize}
The author takes full responsibility for all contents.



\begin{thebibliography}{MT93}

\bibitem[AR1]{AR1} I.~R. Aitchison and J.~H. Rubinstein, \emph{Combinatorial cubings, cusps and the
dodecahedral knots}, in: Topology '90 (B.~Apanasov et al., eds.), de Gruyter, 1992, 17--26.

\bibitem[Bal61]{Bal61} M.~L. Balinski, \emph{On the graph structure of convex polyhedra in $n$-space},
Pacific J.\ Math.\ \textbf{11} (1961), 431--434.

\bibitem[CWY]{CWY} S.~Cappell, S.~Weinberger and M.~Yan, \emph{Closed aspherical manifolds with
center}, J.~Topol.\ \textbf{6} (2013), 1009--1018; arXiv:1108.2321. Page numbers refer to the arXiv version~v3.

\bibitem[DS]{DS} A.~Di Prisa and O.~\c{S}avk, \emph{Turk's head knots and links: a survey}, Expo.\
Math.\ \textbf{44} (2026), 125777; arXiv:2409.20106. Section numbers refer to the arXiv version~v3.

\bibitem[EP]{EP} D.~B.~A. Epstein and R.~C. Penner, \emph{Euclidean decompositions of noncompact
hyperbolic manifolds}, J.~Differential Geom.\ \textbf{27} (1988), 67--80.

\bibitem[GL89]{GL89} C.~McA. Gordon and J.~Luecke, \emph{Knots are determined by their complements},
J.~Amer.\ Math.\ Soc.\ \textbf{2} (1989), 371--415.

\bibitem[Me83]{Me83} W.~Menasco, \emph{Polyhedra representation of link complements}, Contemp.\ Math.\
\textbf{20} (1983), 305--325.

\bibitem[Men84]{Men84} W.~Menasco, \emph{Closed incompressible surfaces in alternating knot and link
complements}, Topology \textbf{23} (1984), 37--44.

\bibitem[MR90]{MR90} R.~Meyerhoff and D.~Ruberman, \emph{Cutting and pasting and the $\eta$-invariant},
Duke Math.\ J.\ \textbf{61} (1990), 747--761.

\bibitem[MT91]{MT91} W.~Menasco and M.~Thistlethwaite, \emph{The Tait flyping conjecture}, Bull.\ Amer.\
Math.\ Soc.\ \textbf{25} (1991), 403--412.

\bibitem[MT93]{MT93} W.~Menasco and M.~Thistlethwaite, \emph{The classification of alternating links},
Ann.\ of Math.\ \textbf{138} (1993), 113--171.

\bibitem[Mur87]{Mur87} K.~Murasugi, \emph{Jones polynomials and classical conjectures in knot theory},
Topology \textbf{26} (1987), 187--194.

\bibitem[QHW]{QHW} C.~V. Quach Hongler and C.~Weber, \emph{Link projections and flypes},
arXiv:0906.2059.

\bibitem[Sch93]{Sch93} A.~Schrijver, \emph{Tait's flyping conjecture for well-connected links},
J.~Combin.\ Theory Ser.~B \textbf{58} (1993), 65--146.

\bibitem[SW95]{SW95} M.~Sakuma and J.~Weeks, \emph{Examples of canonical decompositions of hyperbolic
link complements}, Japan.\ J.\ Math.\ (N.S.) \textbf{21} (1995), 393--439.

\bibitem[Th]{Th} W.~Thurston, \emph{The geometry and topology of three-manifolds}, lecture notes;
electronic version~1.1, March 2002, \url{http://www.msri.org/publications/books/gt3m/} (Example~6.8.11 is on p.~154 there).

\end{thebibliography}
\end{document}